\newtheorem{theorem}{Theorem}
\newtheorem{lemma}[theorem]{Lemma}
\newtheorem{proposition}[theorem]{Proposition}
\def\cM{{\mathcal{M}}}
\def\cP{{\mathcal{P}}}
\begin{document}
\title[Distance from a projection to nilpotents]{Determination of the distance from a projection to nilpotents}

\author[M. Izumi]{Masaki Izumi}
\address{Graduate School of Science,
Kyoto University,
Sakyo-ku, Kyoto 606-8502, Japan.}
\email{izumi@math.kyoto-u.ac.jp}

\author[M. Mori]{Michiya Mori}
\address{Graduate School of Mathematical Sciences, The University of Tokyo, 3-8-1 Komaba, Meguro-ku, Tokyo, 153-8914, Japan; Interdisciplinary Theoretical and Mathematical Sciences Program (iTHEMS), RIKEN, 2-1 Hirosawa, Wako, Saitama 351-0198, Japan.}
\email{mmori@ms.u-tokyo.ac.jp}

\thanks{The first author was supported in part by JSPS KAKENHI Grant Number 20H01805.
The second author was supported in part by JSPS KAKENHI Grant Number 22K13934.}
\subjclass[2020]{Primary 15A60, 47A30.} 

\keywords{projection, nilpotent, distance}

\date{}

\begin{abstract}
In this note, we study the distance from an arbitrary nonzero projection $P$ to the set of nilpotents in a factor $\mathcal{M}$ equipped with a normal faithful tracial state $\tau$. 
We prove that the distance equals $(2\cos \frac{\tau(P)\pi}{1+2\tau(P)})^{-1}$.
This is new even in the case where $\mathcal{M}$ is the matrix algebra. 
The special case settles a conjecture posed by Z. Cramer. 
\end{abstract}

\maketitle

\section{Introduction}
Let $\cM$ be a factor (a von Neumann algebra with trivial center) acting on a separable Hilbert space.
A \emph{nilpotent} is an operator $X\in \cM$ satisfying $X^n=0$ for some positive integer $n$.  
For $X\in \cM$, let $\nu_\cM(X)$ denote the distance from $X$ to the set of nilpotents in $\cM$.
In other words, 
\[
\nu_\cM(X)=\inf\{\lVert X-N\rVert\mid N \text{ is a nilpotent in }\cM\},
\]
where $\lVert \cdot\rVert$ denotes the operator norm.

In 1970, Paul Halmos proposed a list of 10 problems in operator theory \cite{Ha}. 
Problem 7 in the list asks the following. 
Let $\mathbb{B}(\mathcal{H})$ denote the von Neumann algebra (type I$_\infty$ factor) of all bounded linear operators on a separable infinite-dimensional complex Hilbert space $\mathcal{H}$. 
Let $X\in \mathbb{B}(\mathcal{H})$ be a \emph{quasinilpotent} operator, that is, an operator whose spectrum $\sigma(X)$ consists of only one point $0$. Is it true that $\nu_{\mathbb{B}(\mathcal{H})} (X)=0$?

The answer to this problem is affirmative. 
In fact, the theorem below is given by Apostol, Foia\cb{s}, Voiculescu in \cite{AFV}, which completely characterizes the condition for an operator $X$ to satisfy $\nu_{\mathbb{B}(\mathcal{H})} (X)=0$.
\begin{theorem}
Let $X\in \mathbb{B}(\mathcal{H})$. 
The equation $\nu_{\mathbb{B}(\mathcal{H})} (X)=0$ holds if and only if the following three conditions hold. 
\begin{itemize}
\item The spectrum $\sigma(X)$ of $X$ is connected and $0\in \sigma(X)$. 
\item The essential spectrum $\sigma_e(X)$ of $X$ is connected and $0\in \sigma_e(X)$.
\item If $\lambda\in \mathbb{C}$ and $\lambda I-X$ is semi-Fredholm, then the Fredholm index of $\lambda I-X$ equals $0$.
\end{itemize}
\end{theorem}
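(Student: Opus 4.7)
The plan is to prove necessity and sufficiency separately: the former is elementary and follows from stability of spectral data under norm limits, while the latter is considerably deeper and requires the non-commutative Weyl--von Neumann theorem of Voiculescu. For necessity, I would first check that nilpotents themselves trivially satisfy the three conditions. For $N\in\mathbb{B}(\mathcal{H})$ with $N^n=0$, one has $\sigma(N)=\sigma_e(N)=\{0\}$, both connected; moreover $\lambda I-N$ is invertible for $\lambda\neq 0$, while at $\lambda=0$ the chain $\ker N\subseteq\ker N^2\subseteq\cdots\subseteq\ker N^n=\mathcal{H}$ forces $\dim\ker N=\infty$ (and likewise $\dim\ker N^*=\infty$), so $N$ fails to be semi-Fredholm at $0$. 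Hence wherever $\lambda I-N$ is semi-Fredholm it has index $0$. Passing to a norm limit $X=\lim N_k$: if $X$ were invertible then so would $N_k$ be for large $k$, which is impossible, giving $0\in\sigma(X)$; the same argument in the Calkin algebra gives $0\in\sigma_e(X)$. Local constancy of the Fredholm index on the semi-Fredholm domain delivers the index-zero condition. Connectedness of $\sigma(X)$ follows from a Riesz-projection argument: a splitting $\sigma(X)=\sigma_1\sqcup\sigma_2$ into disjoint nonempty closed parts, separated by a contour $\gamma$, would yield a nonzero Riesz projection $P=\frac{1}{2\pi i}\oint_\gamma (zI-X)^{-1}\,dz$, which is the norm limit of the corresponding projections for $N_k$; these vanish once $\sigma(N_k)=\{0\}$ lies outside $\gamma$. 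An identical argument in the Calkin algebra handles $\sigma_e(X)$.

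For sufficiency, I would invoke Voiculescu's non-commutative Weyl--von Neumann theorem together with explicit weighted-shift constructions. The strategy is to approximate $X$, modulo a compact perturbation and approximate unitary equivalence, by an operator of the form $X\oplus Q$ for a suitably chosen quasinilpotent $Q$. Voiculescu's theorem guarantees such an approximate equivalence provided $Q$ has essential spectrum contained in $\sigma_e(X)$ and vanishing index on the semi-Fredholm domain. Choosing $Q$ as a direct sum of weighted unilateral shifts---whose finite-dimensional compressions are nilpotent and converge in norm to $Q$---then yields nilpotents arbitrarily close to $X$. The three hypotheses on $X$ are precisely what make this matching possible: connectedness of $\sigma(X)$ and of $\sigma_e(X)$ removes any spectral obstruction to building $Q$ with $\sigma(Q)\supseteq\sigma(X)$ and $\sigma_e(Q)\subseteq\sigma_e(X)$, while the index-zero condition matches the vanishing index of $Q$.

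The main obstacle is constructing the quasinilpotent model $Q$ with the prescribed spectral data. When $\sigma(X)=\overline{\mathbb{D}}$ and $\sigma_e(X)=\partial\mathbb{D}$, the unilateral shift serves as a ready-made model whose compressions to finite-dimensional coinvariant subspaces are nilpotent with norms approaching that of the shift. The general case demands assembling such shifts along arcs and disks covering $\sigma_e(X)$ and $\sigma(X)$, while preserving connectedness and avoiding extraneous Fredholm obstructions. Verifying that the resulting model satisfies the hypotheses of Voiculescu's theorem with respect to $X$, and that its nilpotent approximants transport back to $X$ via the approximate unitary equivalence produced, is where the bulk of the work in Apostol--Foia\cb{s}--Voiculescu lies.
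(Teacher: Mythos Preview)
The paper does not give its own proof of this statement. Theorem~1 is quoted in the introduction purely as historical background, with attribution to Apostol--Foia\cb{s}--Voiculescu and a bare citation to \cite{AFV}; no argument, not even a sketch, is supplied. Hence there is no ``paper's proof'' against which to compare your proposal.

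A couple of remarks on your sketch nonetheless. The necessity half is essentially sound; your chain argument does force $\dim\ker N=\infty$ (since $N^{k-1}$ injects $\ker N^{k}/\ker N^{k-1}$ into $\ker N$), and the Riesz-projection and index-stability arguments are the standard ones. The sufficiency half, however, has real gaps. Invoking Voiculescu's non-commutative Weyl--von Neumann theorem is anachronistic (it postdates \cite{AFV}) and, more to the point, does not by itself produce nilpotents near $X$: it only tells you $X$ is approximately unitarily equivalent to $X\oplus Q$ modulo compacts, and you still need nilpotents near $X\oplus Q$. Your proposed fix---that finite-rank compressions of a weighted unilateral shift converge \emph{in norm} to the shift---is false: for the unweighted shift $S$ and $P_n$ the projection onto the first $n$ basis vectors, $\lVert S-P_nSP_n\rVert=1$ for every $n$. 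The genuine construction of nilpotent approximants in \cite{AFV} (and in the later expositions in Herrero's monograph \cite{Her5}) is considerably more delicate, using staircase/triangular models and careful control of off-diagonal blocks rather than naive truncation.
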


It is natural to get interested in the value $\nu_\cM(X)$ for a more general factor $\cM$ and an operator $X\in \cM$.
In this note, we completely compute $\nu_\cM(P)$ for a projection $P$ in a general factor $\cM$. 
Recall that a \emph{projection} is an operator $P\in \cM$ satisfying $P=P^2=P^*$.
Let $\cP(\cM)$ denote the set of projections in $\cM$.
For $P\in \cP(\cM)$, we use the symbol $P^\perp := I-P$.
It is clear that $\nu_\cM(0)=0$.

It appears that the oldest result on $\nu_\cM(P)$ for a projection $P$ is given by Hedlund in 1972 \cite{Hed}. 
Herrero {\cite[Corollary 9]{Her1}} completely computed $\nu_\cM(P)$ when $\cM$ is a type I$_\infty$ factor. 
From this result, it is not difficult to get the following. 
\begin{theorem}[{\cite[Corollary 9]{Her1}}, see also {\cite[Section 8]{S1}}]
Let $\cM$ be an infinite factor and $P\in \cP(\cM)\setminus\{0\}$. 
\begin{itemize}
\item If $P^\perp\in \cP(\cM)$ is an infinite projection, then $\nu_\cM(P)=1/2$. 
\item If $P^\perp\in \cP(\cM)$ is a finite projection, then $\nu_\cM(P)=1$. 
\end{itemize}
\end{theorem}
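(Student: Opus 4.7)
The proof splits on whether $P^\perp$ is infinite or finite in $\cM$, and I would treat the two cases separately.

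\textbf{Case 1: $P^\perp$ infinite.} For the lower bound I plan to view $\cM$ concretely as a von Neumann subalgebra of $\mathbb{B}(\mathcal{H})$ via its given representation. In all infinite factor types, the range $P^\perp\mathcal{H}$ is then infinite-dimensional (automatic in types $\mathrm{II}_\infty$ and $\mathrm{III}$, and equivalent to the hypothesis in type $\mathrm{I}_\infty$). Herrero's type $\mathrm{I}_\infty$ theorem applied to $\mathbb{B}(\mathcal{H})$ gives $\nu_{\mathbb{B}(\mathcal{H})}(P)=1/2$. Since every nilpotent in $\cM$ is also a nilpotent in $\mathbb{B}(\mathcal{H})$, the infimum defining $\nu_\cM(P)$ is over a smaller set, whence $\nu_\cM(P)\ge 1/2$.

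For the matching upper bound I would exploit that $P^\perp\sim I$ in the infinite factor $\cM$: the corner $P^\perp\cM P^\perp$ is itself an infinite factor, so $P^\perp$ decomposes as $\sum_{i\ge 1}e_i$ with pairwise orthogonal $e_i$ each equivalent to $P=:e_0$. Fix partial isometries $u_{i0}\in\cM$ with $u_{i0}^*u_{i0}=P$ and $u_{i0}u_{i0}^*=e_i$, and put $u_{ij}:=u_{i0}u_{j0}^*$. The weak closure $\mathcal{N}\subseteq\cM$ of the $*$-algebra spanned by the matrix units $\{u_{ij}\}_{i,j\ge 0}$ is a type $\mathrm{I}_\infty$ subfactor in which $P=u_{00}$ is a minimal projection with infinite complement. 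Herrero's theorem applied inside $\mathcal{N}$ gives $\nu_\mathcal{N}(P)=1/2$, and since the C*-norm on $\cM$ restricts to that on $\mathcal{N}$, one concludes $\nu_\cM(P)\le\nu_\mathcal{N}(P)=1/2$.

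\textbf{Case 2: $P^\perp$ finite.} The upper bound $\nu_\cM(P)\le\|P\|=1$ is immediate from $N=0$. For the lower bound I first note that a type $\mathrm{III}$ factor has no nonzero finite projection, so in that case $P^\perp=0$, $P=I$, and $\|I-N\|\ge\rho(I-N)=1$ because $\sigma(N)=\{0\}$. Otherwise $\cM$ is type $\mathrm{I}_\infty$ or $\mathrm{II}_\infty$ and admits a canonical nontrivial norm-closed two-sided ideal $\mathcal{K}$ containing every finite projection (the compact operators in type $\mathrm{I}_\infty$; the norm closure of the trace ideal in type $\mathrm{II}_\infty$). Let $\pi\colon\cM\to\cM/\mathcal{K}$ be the quotient. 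Then $P^\perp\in\mathcal{K}$ forces $\pi(P)=1_{\cM/\mathcal{K}}$, and $\pi(N)$ is nilpotent with spectrum $\{0\}$. Hence $\sigma(\pi(P-N))=\{1\}$, giving $\|P-N\|\ge\|\pi(P-N)\|\ge 1$.

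The main technical point will be the construction of the subfactor $\mathcal{N}$ in Case 1, checking that it is genuinely of type $\mathrm{I}_\infty$, that $P$ is minimal in it, and that operator norms transfer faithfully from $\mathcal{N}$ to $\cM$. The Case 2 lower bound is then a standard Calkin-type quotient argument once the relevant ideal is identified.
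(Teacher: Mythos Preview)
The paper does not supply its own proof of this theorem; it is quoted from Herrero and Skoufranis with the remark that the general infinite-factor statement is ``not difficult'' to deduce from Herrero's type $\mathrm{I}_\infty$ computation. Your proposal carries out exactly such a deduction, and the argument is correct: the lower bound in Case~1 via the inclusion $\cM\subset\mathbb{B}(\mathcal H)$, the upper bound via an embedded type $\mathrm{I}_\infty$ matrix-unit subfactor, and the Calkin-type quotient argument in Case~2 are all sound.

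Two small remarks. First, in Case~1 your claim that $P^\perp\mathcal H$ being infinite-dimensional is \emph{equivalent} to $P^\perp$ being infinite in a type $\mathrm{I}_\infty$ factor is only an implication in general (the converse can fail when $\cM$ acts with nontrivial multiplicity), but only that implication is needed. Second, you do not actually require $\sum_{i\ge 1}e_i=P^\perp$; it suffices that $P\precsim P^\perp$ and that $P^\perp$ dominates infinitely many pairwise orthogonal copies of $P$, so that the generated von Neumann algebra $\mathcal N$ has unit $\sum_{i\ge 0}e_i\le I$. The check that $W^*(\{u_{ij}\})$ is a type $\mathrm{I}_\infty$ factor with $u_{00}$ minimal is routine (the corner $u_{00}\mathcal N u_{00}$ is one-dimensional), so the ``main technical point'' you flag is indeed standard.
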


Therefore, we restrict our attention to finite factors. 
In the case of matrix algebras, one may find some research in the literature. 
Let $n$ be a positive integer and let $\mathbb{M}_n$ denote the algebra of $n\times n$ complex matrices, which is a type I$_n$ factor.
MacDonald {\cite[Theorem 1]{M1}} proved  that $\nu_{\mathbb{M}_n}(P)=(2\cos\frac{\pi}{n+2})^{-1}$ for every projection $P\in \cP(\mathbb{M}_n)$ of rank $1$.
Cramer {\cite[Theorem 3.6]{C}} proved that $\nu_{\mathbb{M}_n}(P)=(2\cos\frac{\pi}{\frac{n}{n-1}+2})^{-1}$ for every projection $P\in \cP(\mathbb{M}_n)$ of rank $n-1$. 
He conjectured that $\nu_{\mathbb{M}_n}(P)=(2\cos\frac{\pi}{\frac{n}{m}+2})^{-1}$ for every projection $P\in \cP(\mathbb{M}_n)$ of rank $m$, $1\leq m\leq n$ \cite[Conjecture 5.1]{C}. 
The second author recently proved that $\nu_{\mathbb{M}_n}(P)\geq(2\cos\frac{\pi}{n+2})^{-1}$ for every nonzero projection $P\in \cP(\mathbb{M}_n)$ \cite{Mo}, settling a conjecture posed by MacDonald in \cite{M1}. 
For more information concerning related topics, see \cite[Chapter 2]{Her5}, \cite{C}, \cite{M2}.
See also \cite{S1}, \cite{S2}, where the distance to the set of (quasi)nilpotent operators is studied in various settings of C$^*$- or von Neumann algebras.

In what follows, let $\cM$ be a finite factor. 
Then $\cM$ is equipped with a unique normal faithful tracial state $\tau$. 
As one may see from the preceding paragraph, the computation of the exact value of $\nu_\cM(P)$ was not complete even in the case of matrix algebras, and little was known about the case of type II$_1$ factors. 
The goal of this note is to prove
\begin{theorem}\label{tau}
If $0\neq P\in \cP(\cM)$, then $\nu_\cM(P) = (2\cos \theta)^{-1}$, where $\theta =\displaystyle \frac{\tau(P)\pi}{1+2\tau(P)}$.
\end{theorem}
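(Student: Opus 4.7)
Set $t = \tau(P) \in (0,1]$ and $c = (2\cos\theta)^{-1}$, with $\theta = \pi t/(1+2t)$. Rewriting $\theta = \pi/(1/t+2)$ suggests that the extremal nilpotent should be built from a tridiagonal (Jacobi) operator of effective size $1/t + 2$. Indeed, for rational $t = m/n$ in lowest terms, $2\cos\theta = 2\cos(m\pi/(n+2m))$ is the $m$-th largest eigenvalue of the $(n+2m-1)\times(n+2m-1)$ symmetric tridiagonal matrix with $0$ on the diagonal and $1$'s on the off-diagonals, and it is this quantity I would aim to match on both sides.

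\textbf{Upper bound $\nu_\cM(P) \le c$.} I would construct $N$ as a weighted partial isometry on a chain of projections. Choose a partition $I = Q_0 + Q_1 + \dots + Q_L$ with $Q_0 = P$ and the remaining $Q_j$ of equal trace $(1-t)/L$, along with partial isometries $V_j : Q_{j+1}\mathcal{H} \to Q_j\mathcal{H}$ in $\cM$, and set $N = \sum_{j=0}^{L-1} w_j V_j$ with weights $w_j$ taken from the eigenvector corresponding to the eigenvalue $2\cos\theta$ of an appropriate finite tridiagonal matrix. Then $N$ is automatically nilpotent, since the $V_j$'s form a chain of length $L$. A direct calculation, diagonalizing the scalar tridiagonal associated to $P - N$ via Chebyshev polynomials of the second kind, should give $\|P - N\| = c$. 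For $\cM = \mathbb{M}_n$ with $t = m/n$ one picks $L$ accordingly; for irrational $t$ in a type~II$_1$ factor one passes to a continuum limit or exploits the flexibility of the hyperfinite II$_1$ factor.

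\textbf{Lower bound $\nu_\cM(P) \ge c$.} The plan is to sharpen the second author's recent inequality $\nu_{\mathbb{M}_n}(P) \ge (2\cos\frac{\pi}{n+2})^{-1}$ from \cite{Mo}, which does not see $\tau(P)$. For any nilpotent $N$ with $\|P - N\| = c'$, I would consider $\tau(q(P - N))$ for a carefully chosen polynomial $q$. Expanding $q(P - N)$ into monomials in $P$ and $N$ and using $P^2 = P$ together with $\tau(N^i) = 0$ for $i \ge 1$, the trace collapses to a polynomial in $t$ plus a small number of mixed moments that can be controlled by $|c'|^{\deg q}$. Taking $q$ to be the Chebyshev polynomial of the second kind adapted to the value $2\cos\theta$ should then force $c' \ge c$. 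In the II$_1$ setting, the moment condition $\tau(N^i) = 0$ is all that is used, so the argument extends to quasinilpotents and hence to their closure, which contains all nilpotents.

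\textbf{Main obstacle.} The harder step is the lower bound: \cite{Mo} is blind to $\tau(P)$, and extracting the exact $t$-dependence requires a genuinely new variational characterization of $2\cos\theta$, presumably via a Chebyshev recursion calibrated to the spectrum $\{0,1\}$ of $P$ with weights $\{1-t, t\}$. A secondary technical issue is the unbounded nilpotency degree in II$_1$ factors, which forces the lower-bound argument to be phrased entirely in terms of the moments $\tau(N^i)$ and then combined with a density/approximation argument in the spirit of Dykema--Haagerup.
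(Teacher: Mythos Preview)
Your upper-bound construction has a concrete gap. With $Q_0=P$ of trace $t$ and $Q_1,\dots,Q_L$ each of trace $(1-t)/L$, the partial isometry $V_0:Q_1\mathcal{H}\to Q_0\mathcal{H}$ can only cover a subprojection $Q_0'\le P$ of trace $(1-t)/L$; on the complementary piece $(P-Q_0')\mathcal{H}$ one has $P-N=P$, so $\|P-N\|\ge 1>c$ unless $(1-t)/L=t$, i.e.\ $1/t=L+1$ is an integer. Thus your chain-shift only hits the target exactly when $t=1/k$ (equivalently $m\mid n$ in $\mathbb{M}_n$), and no ``continuum limit'' fixes this in $\mathbb{M}_n$ for general $m$. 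The paper does not build $N$ directly; it first reformulates $\nu_\cM(P)$ via Arveson's distance formula as an infimum over chains $0=A_0\le A_1\le\cdots\le A_n=P$ with $\sum_k\delta(A_k-A_{k-1})\le 1$ of $\max_k\|(P-A_{k-1})^{1/2}A_k^{1/2}\|$, and then, for $\mathbb{M}_n$, produces an admissible chain by prescribing the eigenvalues of each $A_k$ via the substitution $A_k=(2\cos\theta)^{-1}\sin B_k/\sin(B_k+\theta I)$ and using the Cauchy interlacing theorem to realise rank-one increments. This bypasses entirely the rank-matching obstruction you hit.

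Your lower bound is, as you yourself flag, only a hope: the mixed moments $\tau(PN^{i_1}PN^{i_2}\cdots)$ do not collapse, and there is no mechanism in your sketch that produces the precise value $2\cos(\pi t/(1+2t))$ from a Chebyshev recursion weighted by $(1-t,t)$. The paper's lower bound is again through the Arveson reformulation: under the trigonometric substitution above, the condition $\|(P-A_{k-1})^{1/2}A_k^{1/2}\|\le(2\cos\theta)^{-1}$ becomes the operator inequality $\cot(B_k+\tfrac{\theta}{2}I)\ge\cot(B_{k-1}+\tfrac{3\theta}{2}I)$, and a determinant/trace identity for products of unitaries (essentially $\det e^X=e^{\operatorname{tr}X}$, extended to finite factors) forces $\tau(B_k)-\tau(B_{k-1})\le\theta\,\delta(A_k-A_{k-1})$. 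Summing in $k$ and using $B_n=(\pi-2\theta)P$ gives $(\pi-2\theta)\tau(P)\le\theta$, which is exactly $\theta\ge\pi\tau(P)/(1+2\tau(P))$ with equality. None of this is visible from the moment approach, and the key idea you are missing is the passage through Arveson's formula together with the $\sin/\sin$ parametrisation that linearises the problem.
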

This completes the program of the determination of $\nu_\cM(P)$ for a general factor $\cM$ and a general projection $P\in \cP(\cM)$.
Note that $\tau(X)=\mathrm{tr}\, X/n$ when $\cM=\mathbb{M}_n$ and $X\in \cM$.
It follows that Theorem \ref{tau} applied to a type I$_n$ factor settles Cramer's conjecture.


\section{Results}
For $X\in \cM$, the projection $E$ onto the closure of the range of $X$ belongs to $\cP(M)$. 
We call $E$ the range projection of $X$.
\begin{lemma}
Let $n\geq 1$. 
An operator $N\in \cM$ satisfies $N^n=0$ if and only if there are projections $0=P_0\leq P_1\leq \cdots\leq P_n=I$ in $\cM$ satisfying $P_{k-1}^\perp NP_k=0$ for every $1\leq k\leq n$.
\end{lemma}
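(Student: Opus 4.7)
The statement is a bi-implication, and I would prove each direction separately, with the sufficiency direction being essentially algebraic and the necessity direction requiring an explicit construction from $N$.

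For the ($\Leftarrow$) direction, the hypothesis $P_{k-1}^\perp N P_k = 0$ rewrites as $NP_k = P_{k-1}NP_k$, i.e.\ the range of $NP_k$ lies in the range of $P_{k-1}$. Since $P_n = I$, I would expand
\[
N^n = N^n P_n = N^{n-1}(NP_n) = N^{n-1}P_{n-1}NP_n,
\]
then apply the same identity to $N^{n-1}P_{n-1} = N^{n-2}(NP_{n-1}) = N^{n-2}P_{n-2}NP_{n-1}$, and iterate. A straightforward induction on $j$ gives
\[
N^j P_k = P_{k-j}NP_{k-j+1}\cdots NP_{k-1}NP_k
\]
whenever $0 \leq j \leq k$. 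Specializing $j = k = n$ and using $P_0 = 0$ yields $N^n = 0$.

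For the ($\Rightarrow$) direction, the natural chain to try is the one coming from kernels of powers of $N$. Assuming $N^n = 0$, I would define $P_k$ to be the projection onto $\ker N^k$ for $0 \leq k \leq n$. Since $N^k \in \cM$, the kernel projection is the spectral projection of $(N^k)^*N^k$ at $\{0\}$, hence lies in $\cM$. Clearly $P_0 = 0$, and $P_n = I$ because $N^n = 0$. The containment $\ker N^{k-1} \subseteq \ker N^k$ gives $P_{k-1} \leq P_k$. Finally, for any $\xi$ in the range of $P_k$, one has $N^k\xi = 0$, so $N\xi \in \ker N^{k-1}$, i.e.\ $NP_k\xi$ lies in the range of $P_{k-1}$; this means $P_{k-1}^\perp NP_k = 0$, as required.

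Neither direction presents a real obstacle: the ($\Leftarrow$) direction is a telescoping computation, and the ($\Rightarrow$) direction only needs the observation that the kernel-of-powers filtration of a nilpotent in $\cM$ lives in $\cM$ and automatically satisfies the required triangular vanishing condition. The one point worth stating carefully is that $\ker N^k$ corresponds to a projection in $\cM$ (not merely in $\mathbb{B}(\mathcal{H})$), which is guaranteed by Borel functional calculus applied to $(N^k)^*N^k \in \cM$.
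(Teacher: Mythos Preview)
Your proof is correct. Both directions are handled essentially as in the paper: the paper dismisses the $(\Leftarrow)$ direction as ``easier'' (your telescoping argument is exactly what is intended), and for the $(\Rightarrow)$ direction the paper defines $P_k$ to be the range projection of $N^{n-k}$ rather than the kernel projection of $N^k$ as you do. These two filtrations are dual (indeed, since $N^k N^{n-k}=0$, the paper's $P_k$ is dominated by yours), and each verifies the triangular condition by the same one-line observation---$N$ maps $\operatorname{ran} N^{n-k}$ into $\operatorname{ran} N^{n-k+1}$ in the paper's version, $N$ maps $\ker N^k$ into $\ker N^{k-1}$ in yours. The only substantive point, which you addressed correctly, is that these projections lie in $\cM$; the paper's range-projection choice makes this slightly more immediate (range projections in a von Neumann algebra are automatic), while your kernel choice requires the functional-calculus remark you supplied.
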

\begin{proof}
If $N^n=0$, then define $P_k$ to be the range projection of $N^{n-k}$.
The converse is easier.
\end{proof}

\begin{lemma}\label{l0}
Let $0=P_0\leq P_1\leq \cdots\leq P_n=I$ be in $\cP(\cM)$ and $X\in \cM$.
Set $\mathcal{N}:=\{N\in \cM\mid P_{k-1}^\perp NP_k=0\text{ for every }1\leq k\leq n\}$. 
Then there is a nilpotent $N_0\in \mathcal{N}$ satisfying 
\[
\lVert X-N_0\rVert=\inf_{N\in \mathcal{N}} \lVert X-N\rVert =\max_{1\leq k\leq n}\lVert P_{k-1}^\perp XP_k\rVert.
\]
\end{lemma}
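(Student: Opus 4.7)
The plan has two parts. The lower bound is immediate: for any $N \in \mathcal{N}$ and any $1 \leq k \leq n$, the condition $P_{k-1}^\perp N P_k = 0$ gives $P_{k-1}^\perp(X - N)P_k = P_{k-1}^\perp X P_k$, whence $\|X - N\| \geq \|P_{k-1}^\perp X P_k\|$; taking the maximum over $k$ bounds $\inf_{N \in \mathcal{N}} \|X - N\|$ below by $\max_k \|P_{k-1}^\perp X P_k\|$.

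For the matching upper bound together with existence of a minimizer, I would prove the following slightly more flexible statement by induction on $n$: for every nest $0 = P_0 \leq P_1 \leq \cdots \leq P_n$ in $\cM$ (with $P_n$ not necessarily equal to $I$) and every $X \in \cM$ satisfying $X P_n^\perp = 0$, there is some $N_0 \in P_n \cM P_n$ that is strictly upper triangular with respect to the nest (that is, $(P_n - P_{k-1}) N_0 P_k = 0$ for all $k$) and satisfies $\|X - N_0\| = \max_k \|P_{k-1}^\perp X P_k\|$. Lemma \ref{l0} is the special case $P_n = I$, and the base case $n = 1$ forces $N_0 = 0$ with $\|X\| = \|X P_1\| = \|P_0^\perp X P_1\|$ following from $X P_1^\perp = 0$.

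For the inductive step I would put $Q := P_{n-1}$; the condition at $k = n$ forces $(P_n - Q) N = 0$, so every admissible $N$ splits as $N = QNQ + QN(P_n - Q)$ with $QNQ$ strictly upper triangular in $Q \cM Q$ with respect to the shortened nest $P_0 \leq \cdots \leq P_{n-1} = Q$, and $QN(P_n - Q)$ entirely free. Expanding $X - N$ as a $3 \times 2$ block operator from $P_n H$ to $H$ along $H = QH \oplus (P_n - Q)H \oplus P_n^\perp H$ (the column indexed by $P_n^\perp$ is zero because $X P_n^\perp = 0$), the only free entry is $QN(P_n - Q)$ in position $(1, 2)$, and Parrott's theorem (applied after compressing the bottom two rows) yields
\[
\inf_{QN(P_n - Q)} \|X - N\| = \max\bigl(\|XQ - QNQ\|,\ \|Q^\perp X\|\bigr),
\]
where $\|Q^\perp X\| = \|P_{n-1}^\perp X P_n\|$ by $X = X P_n$. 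Applying the inductive hypothesis to the shorter nest $P_0 \leq \cdots \leq P_{n-1} = Q$ and the operator $XQ$ (which satisfies $(XQ) Q^\perp = 0$) computes $\inf_{QNQ} \|XQ - QNQ\| = \max_{1 \leq k \leq n-1} \|P_{k-1}^\perp X P_k\|$, closing the induction.

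The hard part is recognizing the need to strengthen the statement in the first place: after the split, the operator $XQ$ has a nontrivial block $Q^\perp X Q$ outside the subalgebra $Q \cM Q$, so the original form of the lemma cannot be applied to the subproblem, and this is exactly why one must allow $X$ to extend outside $P_n \cM P_n$ on the range side while still requiring $X = X P_n$ on the domain side. Existence of the optimal free entry is built into the constructive proof of Parrott's theorem, producing an explicit $N_0$ by induction, and nilpotency of $N_0$ is automatic from the preceding lemma.
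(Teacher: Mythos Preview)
Your argument is correct and is precisely the standard proof of Arveson's distance formula via induction on the length of the nest using Parrott's one-step completion theorem. The paper itself does not give a self-contained proof of this lemma: it simply cites Arveson's distance formula and refers the reader to Parrott \cite{Pa} and Power \cite{P}, and Power's paper carries out exactly the induction you describe. Your observation that the inductive hypothesis must be strengthened to allow $X$ with $XP_n^\perp=0$ (rather than $X\in P_n\cM P_n$) is the key point, since after peeling off the last step the column $XQ$ retains the off-diagonal block $Q^\perp XQ$; this is handled correctly in your outline. One small point worth making explicit is that the optimal Parrott completion is given by an algebraic formula in the remaining blocks, so it automatically lies in $\cM$ and the resulting $N_0$ belongs to $\cN\subset\cM$, not merely to the ambient $\mathbb{B}(\mathcal{H})$.
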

\begin{proof}
This is a version of the so-called Arveson's distance formula \cite{A}. See \cite[Theorem 1]{Pa} and \cite{P}. 
\end{proof}

For $X\in \cM$, let $\delta(X)$ denote the trace of the range projection of $X$.
\begin{lemma}
If $X, Y\in \cM$, then $\delta(XY), \delta(YX)\leq \delta(X)$ and $\delta(X+Y)\leq \delta(X)+\delta(Y)$ hold.
\end{lemma}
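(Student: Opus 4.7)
The plan is to reduce the lemma to two standard facts about a finite von Neumann algebra $\cM$: first, that polar decomposition $T = U|T|$ exhibits a Murray--von Neumann equivalence between the range projection $E_T = UU^*$ of $T$ and the range projection $E_{T^*} = U^*U$ of its adjoint, so that $\tau(E_T) = \tau(E_{T^*})$; and second, Kaplansky's parallelogram identity $E \vee F - F \sim E - E \wedge F$ for $E, F \in \cP(\cM)$, which upon applying $\tau$ gives $\tau(E \vee F) + \tau(E \wedge F) = \tau(E) + \tau(F)$.

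The inequality $\delta(XY) \leq \delta(X)$ is immediate from $\overline{XY\mathcal{H}} \subseteq \overline{X\mathcal{H}}$, which yields $E_{XY} \leq E_X$ and hence the desired bound after taking $\tau$. For $\delta(YX) \leq \delta(X)$ I pass to adjoints: applying what we just showed to $X^*Y^* = (YX)^*$ gives $E_{(YX)^*} \leq E_{X^*}$, and combining this with the trace identities $\tau(E_{YX}) = \tau(E_{(YX)^*})$ and $\tau(E_{X^*}) = \tau(E_X)$ coming from polar decomposition closes the argument.

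For subadditivity, the containment $\overline{(X+Y)\mathcal{H}} \subseteq \overline{X\mathcal{H} + Y\mathcal{H}}$ delivers $E_{X+Y} \leq E_X \vee E_Y$, and Kaplansky's formula together with positivity of $\tau$ gives
\[
\tau(E_X \vee E_Y) = \tau(E_X) + \tau(E_Y) - \tau(E_X \wedge E_Y) \leq \tau(E_X) + \tau(E_Y).
\]
I do not expect a genuine obstacle: the statement packages elementary properties of finite von Neumann algebras, and the only care needed is to pass to closures when invoking inclusions of ranges, and to recall that Murray--von Neumann equivalent projections carry the same trace (a property that genuinely uses finiteness of $\cM$, and is what lets the adjoint trick work for $\delta(YX)$).
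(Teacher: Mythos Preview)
Your proof is correct and follows essentially the same route as the paper: the inequality $\delta(XY)\leq\delta(X)$ from the obvious range inclusion, the adjoint trick via polar decomposition for $\delta(YX)\leq\delta(X)$, and the Kaplansky parallelogram law for subadditivity. The only cosmetic difference is that the paper phrases the second step as $\delta(YX)=\delta((YX)^*)=\delta(X^*Y^*)\leq\delta(X^*)=\delta(X)$ rather than working at the level of range projections, which amounts to the same computation.
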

\begin{proof}
It is clear that $\delta(XY)\leq \delta(X)$. 
If $Z=V\lvert Z\rvert$ is the polar decomposition of an operator $Z\in \cM$,
then $VV^*$ is the range projection of $Z$ and $V^*V$ is the range projection of $Z^*$, hence $\delta(Z)=\tau(VV^*)=\tau(V^*V)=\delta(Z^*)$. 
Thus we have $\delta(YX)=\delta((YX)^*)=\delta(X^*Y^*)\leq \delta(X^*)=\delta(X)$.

Let $E_1,E_2, E_3$ denote the range projections of $X+Y, X, Y$, respectively. 
Then we have $E_1\leq E_2\vee E_3$. Since $E_2\vee E_3-E_2$ is Murray--von Neumann equivalent to $E_3-E_2\wedge E_3$, we have $\tau(E_2\vee E_3-E_2)=\tau(E_3-E_2\wedge E_3)$, which implies $\tau(E_2\vee E_3)\leq \tau(E_2)+\tau(E_3)$. Thus $\delta(X+Y)\leq \delta(X)+\delta(Y)$ holds.
\end{proof}

\begin{lemma}\label{l1}
Let $X\in \cM$ satisfy $X\geq 0$. Let $n\geq 1$ and let $0=P_0\leq P_1\leq \cdots\leq P_n=I$ be in $\cP(\cM)$.
For each $k$, put $A_k=X^{1/2}P_k X^{1/2}$.
Then
\[
\lVert P_{k-1}^\perp XP_k\rVert = \lVert (X-A_{k-1})^{1/2}\cdot A_k^{1/2}\rVert.
\]
Moreover, 
\begin{equation}\label{Ak}
0=A_0\leq A_1\leq \cdots\leq A_n=X\text{ and } \sum_{k=1}^n\delta(A_k-A_{k-1})\leq 1
\end{equation}
hold.
\end{lemma}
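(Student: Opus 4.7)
The plan is to dispatch the monotonicity and trace inequality first (since they are needed to make sense of the square root in the norm identity), and then establish the norm formula by a direct computation using $\lVert Y\rVert^2=\lVert Y^*Y\rVert$.

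For the monotonicity, I would simply observe that
\[
A_k-A_{k-1}=X^{1/2}(P_k-P_{k-1})X^{1/2}\geq 0,
\]
since $P_k\geq P_{k-1}$, and that $A_0=0$, $A_n=X^{1/2}\cdot I\cdot X^{1/2}=X$. In particular, this gives $X-A_{k-1}=X^{1/2}P_{k-1}^\perp X^{1/2}\geq 0$, so its square root exists.

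For the trace bound I would invoke the previous lemma on $\delta$. Writing $Q_k:=P_k-P_{k-1}$, which is a projection with $\delta(Q_k)=\tau(Q_k)$, the inequality $\delta(XY)\leq\delta(X)$ and $\delta(YX)\leq\delta(X)$ together yield
\[
\delta(A_k-A_{k-1})=\delta(X^{1/2}Q_kX^{1/2})\leq\delta(Q_kX^{1/2})\leq\delta(Q_k)=\tau(Q_k),
\]
so summing in $k$ gives a telescoping bound $\sum_{k=1}^{n}\delta(A_k-A_{k-1})\leq\sum_{k=1}^{n}\tau(P_k-P_{k-1})=\tau(I)=1$.

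For the norm identity, set $Y:=P_{k-1}^\perp X^{1/2}A_k^{1/2}$. Using the self-adjointness of $A_k^{1/2}$ and $X^{1/2}$, one computes
\[
Y^*Y=A_k^{1/2}X^{1/2}P_{k-1}^\perp X^{1/2}A_k^{1/2}=A_k^{1/2}(X-A_{k-1})A_k^{1/2},
\]
and
\[
YY^*=P_{k-1}^\perp X^{1/2}A_kX^{1/2}P_{k-1}^\perp=P_{k-1}^\perp XP_kXP_{k-1}^\perp,
\]
because $X^{1/2}A_kX^{1/2}=XP_kX$. Applying $\lVert YY^*\rVert=\lVert Y^*Y\rVert$ and taking square roots, I get
\[
\lVert P_{k-1}^\perp XP_k\rVert^2=\lVert YY^*\rVert=\lVert A_k^{1/2}(X-A_{k-1})A_k^{1/2}\rVert=\lVert (X-A_{k-1})^{1/2}A_k^{1/2}\rVert^2,
\]
which is the desired equality. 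The only thing requiring care is keeping track of the factorizations through $X^{1/2}$ so that the commutation with $P_k$ (which generally fails) never has to be used; this is the only step with any subtlety, and it is handled by the substitution $X^{1/2}P_k X^{1/2}=A_k$ and $X^{1/2}P_{k-1}^\perp X^{1/2}=X-A_{k-1}$.
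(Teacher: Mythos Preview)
Your proof is correct and follows essentially the same approach as the paper. The paper compresses the norm identity into a single chain using the general fact $\lVert AB\rVert=\lVert (A^*A)^{1/2}(BB^*)^{1/2}\rVert$ applied to $A=P_{k-1}^\perp X^{1/2}$ and $B=X^{1/2}P_k$, whereas you unpack the same $C^*$-identity via the auxiliary operator $Y=P_{k-1}^\perp X^{1/2}A_k^{1/2}$; the monotonicity and the $\delta$-bound are handled identically in both.
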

\begin{proof}
We have
\[
\begin{split}
\lVert P_{k-1}^\perp XP_k\rVert &= \lVert P_{k-1}^\perp X^{1/2}\cdot X^{1/2}P_k\rVert\\
&= \lVert (X^{1/2}P_{k-1}^\perp X^{1/2})^{1/2}\cdot (X^{1/2}P_kX^{1/2})^{1/2}\rVert\\
&= \lVert (X-X^{1/2}P_{k-1} X^{1/2})^{1/2}\cdot (X^{1/2}P_kX^{1/2})^{1/2}\rVert\\
&= \lVert (X-A_{k-1})^{1/2}\cdot A_k^{1/2}\rVert.
\end{split}
\]
It is clear that $0=A_0\leq A_1\leq \cdots\leq A_n=X$. 
Since $\delta(A_k-A_{k-1})\leq \delta(P_k-P_{k-1})=\tau(P_k-P_{k-1})$, we get $\sum_{k=1}^n\delta(A_k-A_{k-1})\leq 1$.
\end{proof}

Conversely, the following holds.
\begin{lemma}\label{l2}
Let $X\in \cM$ satisfy $X\geq 0$.
Let $n\geq 1$ and $A_0, A_1,\dots, A_n\in \cM$ satisfy \eqref{Ak}. 
Then there are projections $0=P_0\leq P_1\leq \cdots\leq P_n=I$ in $\cM$ satisfying $X^{1/2}P_kX^{1/2}=A_k$ for every $k$.
\end{lemma}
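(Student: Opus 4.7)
The plan is to write down candidate operators $P_k$ explicitly via the generalized inverse of $X^{1/2}$ on its range, and to use the trace hypothesis to force the resulting positive operators to be orthogonal projections.

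Set $B_k := A_k - A_{k-1}$ and let $E$ denote the range projection of $X$. I would define $C_k := X^{-1/2}B_k X^{-1/2} \in E\cM E$, where $X^{-1/2}$ is the inverse of $X^{1/2}$ as a bounded map $EH \to EH$, extended by zero on $E^\perp H$. Then immediately $C_k \geq 0$, $\sum_k C_k = E$, $X^{1/2}C_k X^{1/2} = B_k$, and from $B_k \leq A_n = X$ also $C_k \leq E$. Two applications of the preceding lemma---once to $C_k = X^{-1/2}B_k X^{-1/2}$ and once to $B_k = X^{1/2}C_k X^{1/2}$---give $\delta(C_k) = \delta(B_k)$, so the range projection $G_k$ of $C_k$ satisfies $\tau(G_k) = \delta(B_k)$.

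When $X$ is invertible ($E = I$), the argument then closes by trace rigidity: from $C_k \leq G_k$ we get $I = \sum_k C_k \leq \sum_k G_k$, and taking traces yields $1 \leq \sum_k \tau(G_k) = \sum_k \delta(B_k) \leq 1$. Every inequality is therefore an equality; in particular $\sum_k G_k = I$, and since a finite sum of projections equal to $I$ is automatically an orthogonal decomposition, the $G_k$'s are pairwise orthogonal. Combined with $C_k \leq G_k$ and $\sum C_k = \sum G_k = I$, this forces $C_k = G_k$. Each $C_k$ is then a projection, and $P_k := \sum_{j=1}^{k} C_j$ is the required increasing chain.

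For the general case, I would first pass to a reduced factor $E_1\cM E_1$, where $E_1 \geq E$ is chosen in $\cP(\cM)$ with $\tau(E_1) = \sum_k \delta(B_k)$, so that the trace condition becomes an equality there; the final projection $P_n$ can then be set to $I$ by absorbing $I - E_1$ into the last jump $P_n - P_{n-1}$, which is admissible because $X^{1/2}$ annihilates $(I - E_1)H$. The hard part is the residual situation in which $X$ is still not invertible on $E_1 H$: there each $P_k$ must be built in block form relative to $E_1 = E + (E_1 - E)$, with forced $E$-corner $X^{-1/2}A_k X^{-1/2}$, and with off-diagonal and $(E_1 - E)$-corner pieces constructed via Douglas's factorization and Murray--von Neumann comparison in the factor; the equality $\sum \delta(B_k) = \tau(E_1)$ is what provides exactly enough room in the additional corner to organize these blocks consistently so that each $P_k$ is a projection and the chain is increasing.
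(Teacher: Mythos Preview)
Your invertible-case argument is correct and rather nice: once $X$ is genuinely invertible the operators $C_k=X^{-1/2}(A_k-A_{k-1})X^{-1/2}$ are well defined, satisfy $\sum_k C_k=I$ and $C_k\le G_k$ with $\tau(G_k)=\delta(A_k-A_{k-1})$, and the trace squeeze forces each $C_k$ to be a projection.

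The gap is in the non-invertible case. Your definition of $C_k$ already presupposes that $X^{1/2}\colon EH\to EH$ has a bounded inverse; having $E=I$ does not guarantee this in a $\mathrm{II}_1$ factor. More seriously, even granting a Douglas-type construction of $C_k\in E\cM E$ with $X^{1/2}C_kX^{1/2}=A_k-A_{k-1}$, these $C_k$ need not be projections when $\tau(E)<\sum_k\delta(A_k-A_{k-1})$: for instance, with $X=\mathrm{diag}(1,0,0)\in\mathbb{M}_3$, $A_1=X/2$, one gets $C_1=C_2=\tfrac12 E$. So the ``forced $E$-corner'' of your block ansatz is not a projection corner, and the sentence about filling in the off-diagonal and complementary blocks via Douglas and Murray--von Neumann comparison is a statement of what would have to be done, not a proof that it can be. The increasing-chain requirement makes this a genuine simultaneous-dilation problem, and you have not supplied the construction.

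The paper sidesteps all of this with a short polar-decomposition trick that you should know. Rather than inverting $X^{1/2}$, choose for each $k$ an operator $Y_k$ with $Y_k^*Y_k=A_k-A_{k-1}$ and with the range projections $Q_k$ of the $Y_k$ pairwise orthogonal; this is possible precisely because $\sum_k\delta(A_k-A_{k-1})\le 1$ in a factor. Then $Y:=\sum_k Y_k$ satisfies $Y^*Y=X$ and $Y^*(Q_1+\cdots+Q_k)Y=A_k$. Since $\cM$ is finite, write $Y=UX^{1/2}$ with $U$ unitary and set $P_k=U^*(Q_1+\cdots+Q_k)U$ for $k<n$, $P_n=I$. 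This handles all cases uniformly, with no invertibility hypothesis and no block-by-block dilation.
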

\begin{proof}
Since $\displaystyle \sum_{k=1}^n\delta(A_k-A_{k-1})\leq 1$, one may choose operators $Y_1, Y_2, \ldots, Y_n\in \cM$, whose range projections $Q_1, Q_2, \ldots, Q_n$ are pairwise orthogonal, with the property $Y_k^*Y_k= A_k-A_{k-1}$.
Then $Y:=Y_1+Y_2+\cdots +Y_n$ satisfies $Y^*(Q_1+\cdots+Q_k)Y=A_k$ for every $k$. 
Since $\cM$ is of finite type and $Y^*Y=X$, one may take a unitary operator $U\in \cM$ with $Y=UX^{1/2}$.
Thus $P_k=U^*(Q_1+\cdots Q_k)U$, $k\leq n-1$,  and $P_n=I$ satisfy the desired property.
\end{proof} 

Therefore, we get the following. 
\begin{proposition}\label{equivalent}
Let $X\in \cM$ and $X\geq 0$. 
Then
\[
\nu_\cM(X)= \inf \max_{1\leq k\leq n} \lVert (X-A_{k-1})^{1/2}\cdot A_k^{1/2}\rVert,
\]
where the infimum in the right-hand side is taken over the collection of all $n\geq 1$ and $(A_0, A_1,\dots, A_n)\in \cM^{n+1}$ satisfying \eqref{Ak}.
\end{proposition}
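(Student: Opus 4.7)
The plan is to assemble the four preceding lemmas into the asserted equality by showing that the set of nilpotents and the set of admissible tuples $(A_0, \ldots, A_n)$ satisfying \eqref{Ak} are in effect interchangeable, and that corresponding objects produce equal values of the quantity being minimized.

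First I would unwind the definition of $\nu_\cM(X)$ using the first lemma: every nilpotent $N$ with $N^n = 0$ lies in the set $\mathcal{N}$ associated with the chain $0 = P_0 \leq P_1 \leq \cdots \leq P_n = I$ obtained from the range projections of $N^{n-k}$, and conversely every element of any such $\mathcal{N}$ is a nilpotent. Hence
\[
\nu_\cM(X) \;=\; \inf_{\text{chains}} \;\inf_{N \in \mathcal{N}} \lVert X - N\rVert,
\]
where the outer infimum runs over all $n \geq 1$ and all chains $0 = P_0 \leq \cdots \leq P_n = I$ in $\cP(\cM)$. Applying Lemma \ref{l0} to the inner infimum converts this into
\[
\nu_\cM(X) \;=\; \inf_{\text{chains}} \;\max_{1 \leq k \leq n} \lVert P_{k-1}^\perp X P_k \rVert.
\]

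Next I would translate this infimum over chains into an infimum over tuples $(A_0, \ldots, A_n)$ satisfying \eqref{Ak}. For the inequality $\geq$, take any chain and set $A_k = X^{1/2} P_k X^{1/2}$; Lemma \ref{l1} provides both the identity $\lVert P_{k-1}^\perp X P_k\rVert = \lVert (X - A_{k-1})^{1/2} A_k^{1/2}\rVert$ and the verification that the resulting $A_k$ satisfy \eqref{Ak}, so the infimum over chains is bounded below by the infimum over admissible tuples. For the reverse inequality $\leq$, take any $(A_0, \ldots, A_n)$ satisfying \eqref{Ak}; Lemma \ref{l2} produces a chain $0 = P_0 \leq \cdots \leq P_n = I$ with $X^{1/2} P_k X^{1/2} = A_k$, and invoking Lemma \ref{l1} again at this chain recovers the same value $\lVert(X - A_{k-1})^{1/2} A_k^{1/2}\rVert$ as $\lVert P_{k-1}^\perp X P_k\rVert$. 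Together these two inequalities give the desired equality.

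The routine bookkeeping is just keeping track that the correspondence chain $\leftrightarrow$ tuple is value-preserving in the sense of this maximum; no further estimates are needed. Insofar as there is an obstacle, it is purely conceptual: ensuring that Lemma \ref{l2} genuinely reverses the passage from chains to tuples defined in Lemma \ref{l1}, which it does precisely because the finiteness of $\cM$ lets us absorb the partial isometry into a unitary when extracting $P_k$ from $A_k$.
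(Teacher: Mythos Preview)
Your proposal is correct and is precisely the assembly the paper intends: the paper states the proposition immediately after Lemmas \ref{l1} and \ref{l2} with only the phrase ``Therefore, we get the following,'' leaving the reader to combine the nilpotent--chain correspondence, Arveson's distance formula (Lemma \ref{l0}), and the chain--tuple correspondence (Lemmas \ref{l1} and \ref{l2}) exactly as you have spelled out.
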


In what follows, we study the case $0\neq X=P\in \cP(\cM)$. 
Let $n\geq 1$ and $(A_0, A_1,\dots, A_n)\in \cM^{n+1}$ satisfy \eqref{Ak}.
Set $\displaystyle \theta := \frac{\tau(P)\pi}{1+2\tau(P)}$ and $\varphi:=\theta/2$.
To get Theorem \ref{tau}, we need to compare $(2\cos\theta)^{-1}$ with
\[
\max_{1\leq k\leq n}\lVert (P-A_{k-1})^{1/2}A_k^{1/2}\rVert=
\max_{1\leq k\leq n}\lVert (I-A_{k-1})^{1/2}A_k^{1/2}\rVert.
\]
Observe that the mapping 
\[
t\mapsto \frac{1}{2\cos\theta} \cdot \frac{\sin t}{\sin (t +\theta)}
\]
is a monotone increasing bijection from $[0, \pi-2\theta]$ onto $[0,1]$.
Since $0\leq A_k\leq P$, by functional calculus we may uniquely take an operator $B_k\in \cM$ satisfying $0\leq B_k\leq (\pi-2\theta) P$ and 
\begin{equation}\label{akbk}
A_k= \frac{1}{2\cos\theta} \cdot \frac{\sin B_k}{\sin (B_k +\theta I)}.
\end{equation}
(Note that the right-hand side makes sense because $\sin(B_k +\theta I)$ is invertible and the pair $\sin B_k$, $\sin(B_k +\theta I)$ commutes. In what follows we frequently use such fractional expressions.)
Since $A_0\leq A_1\leq \cdots\leq A_n$, we have 
\[
\frac{\sin B_0}{\sin (B_0 +\theta I)}\leq \frac{\sin B_1}{\sin (B_1 +\theta I)}\leq\cdots \leq \frac{\sin B_n}{\sin (B_n +\theta I)}.
\]

\begin{lemma}\label{la}
Let $B, {B'}\in \cM$ satisfy $0\leq B\leq (\pi-2\theta) I$, $0\leq {B'}\leq (\pi-2\theta) I$. 
Then 
$\displaystyle \frac{\sin B}{\sin (B +\theta I)}\leq \frac{\sin {B'}}{\sin ({B'} +\theta I)}$ holds
if and only if
$\cot (B+{\varphi}I)\geq \cot ({B'}+{\varphi}I)$.
Moreover, we have
\[
\delta\left(\frac{\sin B}{\sin (B +\theta I)}-\frac{\sin {B'}}{\sin ({B'} +\theta I)}\right) = \delta(\cot (B+{\varphi}I)- \cot ({B'}+{\varphi}I)).
\]
\end{lemma}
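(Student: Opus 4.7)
The plan is to linearize via the half-angle shift $X:=\cot(B+\varphi I)$, reducing everything to a single M\"obius-type identity. Under $0\leq B\leq (\pi-2\theta)I$, the spectrum of $B+\varphi I$ lies in $[\varphi,\pi-3\varphi]\subset(0,\pi)$ (noting $\varphi<\pi/6$), so $\sin(B+\varphi I)$ is positive and invertible. The angle-addition formulas
\[
\sin B=\cos\varphi\,\sin(B+\varphi I)-\sin\varphi\,\cos(B+\varphi I),
\]
\[
\sin(B+\theta I)=\cos\varphi\,\sin(B+\varphi I)+\sin\varphi\,\cos(B+\varphi I),
\]
together with the fact that all functions of $B$ mutually commute, give, upon dividing through by $\sin(B+\varphi I)$,
\[
\frac{\sin B}{\sin(B+\theta I)}=(\cos\varphi\cdot I-\sin\varphi\cdot X)C^{-1}=2\cos\varphi\cdot C^{-1}-I,
\]
where $C:=\cos\varphi\cdot I+\sin\varphi\cdot X=\sin(B+\theta I)\sin(B+\varphi I)^{-1}$ is a product of two commuting positive invertibles, hence positive and invertible. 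Defining $X',C'$ analogously from $B'$, subtraction yields the workhorse identity
\[
\frac{\sin B}{\sin(B+\theta I)}-\frac{\sin B'}{\sin(B'+\theta I)}=2\cos\varphi\,(C^{-1}-C'^{-1}).
\]

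Both claims now follow mechanically. For the ordering: since $C,C'>0$ are invertible, the left side is $\leq 0$ iff $C^{-1}\leq C'^{-1}$ iff $C\geq C'$ iff $\sin\varphi\cdot X\geq\sin\varphi\cdot X'$ iff $\cot(B+\varphi I)\geq\cot(B'+\varphi I)$. For the range-projection equality, the resolvent identity $C^{-1}-C'^{-1}=C^{-1}(C'-C)C'^{-1}=-\sin\varphi\,C^{-1}(X-X')C'^{-1}$ combined with $2\sin\varphi\cos\varphi=\sin\theta$ gives
\[
\frac{\sin B}{\sin(B+\theta I)}-\frac{\sin B'}{\sin(B'+\theta I)}=-\sin\theta\cdot C^{-1}(X-X')C'^{-1}.
\]
Since $\sin\theta\neq0$ and $C,C'$ are invertible, multiplication on either side by an invertible element preserves the range projection (use $\delta(YZ)\leq\delta(Y)$ with $\delta(Z^*)=\delta(Z)$ from the previous lemma twice, applied to $C^{-1}(\cdots)C'^{-1}$ and to $C(\cdots)C'$, to get equality in both directions), so $\delta$ of the left side equals $\delta(X-X')=\delta(\cot(B+\varphi I)-\cot(B'+\varphi I))$.

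The only genuine subtlety is the noncommutativity of $B$ and $B'$: the single-variable identity $\sin B/\sin(B+\theta I)=2\cos\varphi\cdot C^{-1}-I$ must be verified as an operator identity within the commuting functional calculus of $B$ alone, and the joint manipulation that follows must respect the fact that $C$ and $C'$ need not commute. Once the resolvent step is carried out, however, the $\delta$-invariance under left/right multiplication by invertibles handles this automatically, and I do not anticipate any other obstacle.
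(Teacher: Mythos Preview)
Your proof is correct and follows essentially the same approach as the paper: both rewrite $\sin B/\sin(B+\theta I)$ via the half-angle shift as an affine function of the inverse of $\cos\varphi\,I+\sin\varphi\,\cot(B+\varphi I)$ (the paper normalizes this to $I+\tan\varphi\,\cot(B+\varphi I)$), then invoke the order-reversal of inversion on positive invertibles for the first claim and the resolvent identity $C^{-1}-C'^{-1}=C^{-1}(C'-C)C'^{-1}$ for the $\delta$-equality. The only imprecision is the parenthetical ``$\varphi<\pi/6$'' (one has $\varphi\leq\pi/6$ when $\tau(P)=1$), but this is irrelevant since $[\varphi,\pi-3\varphi]\subset(0,\pi)$ holds regardless.
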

\begin{proof}
By applying the formula $\sin(\alpha \pm\beta)=\sin \alpha \cos\beta\pm \cos \alpha\sin \beta$ to mutually commuting operators, we get
\[
\begin{split}
\sin(B+{\varphi}I\pm {\varphi}I) &= \sin (B+{\varphi}I) \cos({\varphi}I)\pm \cos (B+{\varphi}I) \sin ({\varphi}I)\\
&= \cos{\varphi} \sin (B+{\varphi}I)\pm \sin {\varphi}\cos (B+{\varphi}I). 
\end{split}
\]
Thus 
\[
\frac{\sin B}{\sin (B +\theta I)} = \frac{\cos{\varphi} \sin (B+{\varphi}I)- \sin {\varphi}\cos (B+{\varphi}I)}{\cos{\varphi} \sin (B+{\varphi}I)+ \sin {\varphi}\cos (B+{\varphi}I)}
= \frac{2I}{I+ \tan {\varphi}\cot (B+{\varphi}I)}-I.
\]
We also get 
\[
\frac{\sin {B'}}{\sin ({B'} +\theta I)} = 
\frac{2I}{I+ \tan {\varphi}\cot ({B'}+{\varphi}I)}-I.
\]
Therefore, 
$\displaystyle \frac{\sin B}{\sin (B +\theta I)}\leq \frac{\sin {B'}}{\sin ({B'} +\theta I)}$ holds if and only if 
\[
\frac{2I}{I+ \tan {\varphi}\cot (B+{\varphi}I)}\leq \frac{2I}{I+ \tan {\varphi}\cot ({B'}+{\varphi}I)}. 
\]
Since $X\leq Y\iff X^{-1}\geq Y^{-1}$ for positive invertible operators, we see that the above condition is equivalent to $\cot (B+{\varphi}I)\geq \cot ({B'}+{\varphi}I)$.

Using the identity $X^{-1}-Y^{-1} =X^{-1}(Y-X)Y^{-1}$ for invertible operators, we also get the equality concerning $\delta$.
\end{proof}

\begin{lemma}\label{XY}
Let $B, {B'}\in \cM$ satisfy $0\leq B\leq (\pi-2\theta) I$, $0\leq {B'}\leq (\pi-2\theta) I$. 
Put 
\[
A= \frac{1}{2\cos\theta} \cdot \frac{\sin B}{\sin (B +\theta I)},\quad 
A'= \frac{1}{2\cos\theta} \cdot \frac{\sin {B'}}{\sin ({B'} +\theta I)}.
\]
Then $\displaystyle \lVert (I-A)^{1/2}(A')^{1/2}\rVert\leq(2\cos\theta)^{-1}$
if and only if 
$\cot ({B'}+{\varphi}I)\geq \cot (B+{3\varphi}I)$.
\end{lemma}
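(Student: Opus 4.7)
The plan is to translate the norm bound into a concrete operator inequality and then recognize a Möbius transformation whose operator monotonicity allows reduction to the cotangent comparison, following the template of Lemma \ref{la}. Setting $c:=2\cos\theta$ and using $\lVert T\rVert^2=\lVert T^*T\rVert$, the condition $\lVert (I-A)^{1/2}(A')^{1/2}\rVert\leq c^{-1}$ rewrites as $(A')^{1/2}(I-A)(A')^{1/2}\leq c^{-2}I$, which regroups as
\[
(cA')^{1/2}\cdot c(I-A)\cdot (cA')^{1/2}\leq I.
\]
Apart from the degenerate case $B=(\pi-2\theta)I$ (in which $A=I$ and both sides of the claimed equivalence hold trivially), $c(I-A)$ is positive invertible, so the standard fact $X^{1/2}YX^{1/2}\leq I\iff X\leq Y^{-1}$ for positive $X$ and positive invertible $Y$ converts the inequality to $cA'\leq (c(I-A))^{-1}$.

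The next step is to bring both sides into the same shape. From the scalar identity $2\cos\theta\cdot\sin(b+\theta)-\sin b=\sin(b+2\theta)$ and functional calculus at $B$, I would derive $c(I-A)=\sin(B+2\theta I)\sin(B+\theta I)^{-1}$. Introducing $v:=B'+\varphi I$ and $w:=B+3\varphi I$ (so that $B'=v-\varphi I$, $B'+\theta I=v+\varphi I$, $B+\theta I=w-\varphi I$, and $B+2\theta I=w+\varphi I$), both sides of the inequality become values of a single function
\[
f(z):=\sin(z-\varphi I)\sin(z+\varphi I)^{-1},
\]
namely $cA'=f(v)$ and $(c(I-A))^{-1}=f(w)$. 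The criterion thus reduces to $f(v)\leq f(w)$.

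Finally, I would realize $f$ as a Möbius transformation composed with $\cot$. Since $\sin(z\pm\varphi I)=\sin z\cos\varphi\pm\cos z\sin\varphi$ and $\sin z$ is positive invertible on the spectra of $v$ and $w$ (contained in $[\varphi,\pi-3\varphi]$ and $[3\varphi,\pi-\varphi]$ respectively, both inside $(0,\pi)$), factoring $\sin z\cos\varphi$ out yields
\[
f(z)=(I-T\cot z)(I+T\cot z)^{-1}=g(\cot z),\qquad T:=\tan\varphi,
\]
where $g(t)=(1-Tt)/(1+Tt)=-1+(2/T)(1/T+t)^{-1}$. Since $t\mapsto (1/T+t)^{-1}$ is operator monotone decreasing on $(-1/T,\infty)$, so is $g$; the inverse $g^{-1}(s)=(1/T)(1-s)/(1+s)$ has the same Möbius form and is likewise operator monotone decreasing on $(-1,\infty)$. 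Hence $g(X)\leq g(Y)\iff X\geq Y$ whenever the spectra lie in $(-1/T,\infty)=(-\cot\varphi,\infty)$. The spectral bounds on $v$ and $w$ guarantee $\cot v,\cot w\in(-\cot\varphi,\infty)$, so $f(v)\leq f(w)$ is equivalent to $\cot v\geq\cot w$, which is the stated criterion. The main obstacle is the bookkeeping to reshape $cA'$ and $(c(I-A))^{-1}$ into the same function $f$, and the verification that the spectra of $\cot v$ and $\cot w$ stay in the operator-monotone domain of $g$.
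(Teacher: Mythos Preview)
Your approach coincides with the paper's: both derive $c(I-A)=\sin(B+2\theta I)\sin(B+\theta I)^{-1}$ and then recast everything as a M\"obius-in-$\cot$ comparison. Your $g(t)=(1-\tan\varphi\,t)/(1+\tan\varphi\,t)$ evaluated at $t=\cot(B'+\varphi I)$ and $t=\cot(B+3\varphi I)$ is precisely the paper's substitution $T=\tan\varphi\cot(B'+\varphi I)$, $S=\tan\varphi\cot(B+3\varphi I)$, and the target inequality $S\le T$ is the same.

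There is, however, a gap in your invertibility step. The operator $c(I-A)=\sin(B+2\theta I)\sin(B+\theta I)^{-1}$ fails to be invertible whenever $\pi-2\theta\in\sigma(B)$, not only when $B=(\pi-2\theta)I$. Equivalently, $w=B+3\varphi I$ can have $\pi-\varphi$ in its spectrum, so $\cot w$ can hit $-\cot\varphi$, the boundary of your domain $(-\cot\varphi,\infty)$ for $g$; then $f(w)=g(\cot w)$ is undefined and the passage $X^{1/2}YX^{1/2}\le I\iff X\le Y^{-1}$ to $cA'\le(c(I-A))^{-1}$ collapses. The paper avoids this by never inverting $c(I-A)$: it keeps the condition as
\[
\Bigl\lVert\Bigl(\tfrac{I+S}{I-S}\Bigr)^{1/2}\Bigl(\tfrac{I-T}{I+T}\Bigr)^{1/2}\Bigr\rVert\le 1
\]
and runs a chain of equivalences down to $S\le T$ using only that $I-S$ and $I+T$ are invertible, which always holds since $\tan\varphi\cot3\varphi<1$ forces $S\le(\tan\varphi\cot3\varphi)I<I$ and $T\ge-(\tan\varphi\cot3\varphi)I>-I$. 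Repair your argument by following that chain (or, less cleanly, by a limiting argument in $B$).
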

\begin{proof}
Since 
\[
\sin B + \sin (B+2\theta I)= 2\sin (B+\theta I)\cos (\theta I) = 2\cos\theta \sin (B+\theta I),
\]
we get 
\[
I- \frac{1}{2\cos\theta}\cdot \frac{\sin B}{\sin (B +\theta I)} = \frac{1}{2\cos\theta}\cdot \frac{\sin (B+2\theta I)}{\sin (B +\theta I)}.
\]
It follows that
\[
\lVert (I-A)^{1/2}(A')^{1/2}\rVert\leq (2\cos\theta)^{-1} \iff 
\left\lVert \left(\frac{\sin (B +2\theta I)}{\sin (B +\theta I)}\right)^{1/2}
\left(\frac{\sin {B'}}{\sin ({B'} +\theta I)}\right)^{1/2} \right\rVert \leq 1.
\]
Using the argument as in the proof of the preceding lemma, we get 
\[
\frac{\sin (B+2\theta I)}{\sin (B +\theta I)} = 
\frac{I + \tan {\varphi}\cot (B+{3\varphi}I)}{I - \tan {\varphi}\cot (B+{3\varphi}I)}
= \frac{I+S}{I-S}
\]
and 
\[
\frac{\sin {B'}}{\sin ({B'} +\theta I)} = 
\frac{I- \tan {\varphi}\cot ({B'}+{\varphi}I)}{I + \tan {\varphi}\cot ({B'}+{\varphi}I)}
= \frac{I-T}{I+T},
\]
where $S= \tan{\varphi} \cot(B+{3\varphi}I)$ and $T= \tan{\varphi} \cot({B'}+{\varphi}I)$.
(Observe that $-I\leq S\leq I$ and $-I\leq T \leq I$ hold. Note that $I-S$ and $I+T$ are invertible, while $I+S$ and $I-T$ are not necessarily invertible.)
Hence
\[
\begin{split}
\lVert (I-A)^{1/2}(A')^{1/2}\rVert\leq (2\cos\theta)^{-1} &\iff 
\left\lVert \left(\frac{I+S}{I-S}\right)^{1/2}
\left(\frac{I-T}{I+T}\right)^{1/2} \right\rVert \leq 1\\
&\iff \left(\frac{I-T}{I+T}\right)^{1/2}\left(\frac{I+S}{I-S}\right)\left(\frac{I-T}{I+T}\right)^{1/2} \leq I\\
&\iff (I-T)^{1/2}\left(\frac{2I}{I-S}-I\right)(I-T)^{1/2} \leq I+T\\
&\iff 2(I-T)^{1/2}(I-S)^{-1}(I-T)^{1/2} \leq 2I\\
&\iff (I-T)^{1/2}(I-S)^{-1}(I-T)^{1/2} \leq I\\
&\iff \lVert (I-S)^{-1/2}(I-T)^{1/2}\rVert\leq 1\\
&\iff (I-S)^{-1/2}(I-T)(I-S)^{-1/2}\leq I\\
&\iff I-T\leq I-S\\
&\iff S\leq T\\
&\iff \cot (B+{3\varphi}I)\leq  \cot({B'}+{\varphi}I).
\end{split}
\]
\end{proof}

\begin{lemma}\label{XYZW}
Let $X, Y, Z\in \cM$ be self-adjoint operators satisfying $X\geq Y\geq Z$. 
If $X-Z$ is invertible, then there uniquely exists a self-adjoint operator $W\in \cM$ satisfying $X\geq Y\geq W\geq  Z$, $\delta(X-Y)=\delta(X-W)$, and $\delta(X-W)+\delta(W-Z)=1$.
\end{lemma}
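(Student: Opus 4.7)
The plan is to reformulate in terms of the translated quantities $T := X - Z$ (positive and invertible by hypothesis), $S := Y - Z$ (satisfying $0 \leq S \leq T$), $R := W - Z$, and the projection $E := \operatorname{supp}(T - S) = \operatorname{supp}(X - Y)$. The required conditions on $W$ translate into finding $R$ with $0 \leq R \leq S$, $\operatorname{supp}(T - R) = E$, and $\tau(\operatorname{supp}(R)) = \tau(I - E)$. The first step is to observe that $R \leq S$ gives $T - R \geq T - S \geq 0$, so $\operatorname{supp}(T - R) \geq E$; combined with $\delta(T - R) = \delta(T - S)$, faithfulness of $\tau$ in a finite factor forces $\operatorname{supp}(T - R) = E$, which is equivalent to $(T - R)(I - E) = 0$ and, using $(T - S)(I - E) = 0$, to $R(I - E) = S(I - E)$. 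Setting $q := S - R$, the problem becomes: find $q \geq 0$ with $q = E q E$, $q \leq S$, and $\tau(\operatorname{supp}(S - q)) = \tau(I - E)$.

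Invertibility of $T$ enters decisively here: $(I - E) T (I - E) \geq \varepsilon (I - E)$ for some $\varepsilon > 0$, and $S(I - E) = T(I - E)$ then gives $S_{22} := (I - E) S (I - E) \geq \varepsilon(I - E)$, so $S_{22}$ is invertible inside the corner $(I - E)\mathcal{M}(I - E)$. Decomposing $S$ as a $2 \times 2$ block matrix with respect to $E$ and $I - E$, the Schur complement criterion identifies a unique maximum element $q_{\max}$ of the set $\{q \in \mathcal{M} : q = E q E,\ 0 \leq q \leq S\}$, namely $E q_{\max} E = ESE - ES(I - E) S_{22}^{-1} (I - E)SE$. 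Setting $W := Z + S - q_{\max}$, a direct block computation shows $S - q_{\max} = VV^*$, where $V\colon (I - E)\mathcal{H} \to \mathcal{H}$ sends $\eta \mapsto (S_{12} S_{22}^{-1/2}\eta,\ S_{22}^{1/2}\eta)$ with $S_{12} := ES(I - E)$; the range of $V$ is the graph of $S_{12} S_{22}^{-1}$, so the range projection of $S - q_{\max}$ is Murray--von Neumann equivalent to $I - E$, yielding $\tau(\operatorname{supp}(S - q_{\max})) = \tau(I - E)$ and establishing existence.

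For uniqueness, any valid $q$ satisfies $q \leq q_{\max}$ by Schur positivity. Let $D := q_{\max} - q \geq 0$; then $D = E D E$, so $\operatorname{supp}(D) \leq E$. Let $F := \operatorname{supp}(S - q_{\max})$; the graph-of-operator description shows that the only graph vector lying in $E\mathcal{H}$ is $0$, so $F \wedge E = 0$, and \emph{a fortiori} $F \wedge \operatorname{supp}(D) = 0$. Writing $S - q = (S - q_{\max}) + D$ as a sum of positives, $\operatorname{supp}(S - q) = F \vee \operatorname{supp}(D)$, and the Kaplansky parallelogram law in the finite factor yields $\tau(\operatorname{supp}(S - q)) = \tau(F) + \tau(\operatorname{supp}(D)) = \tau(I - E) + \tau(\operatorname{supp}(D))$. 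The trace constraint then forces $\tau(\operatorname{supp}(D)) = 0$, and faithfulness of $\tau$ gives $D = 0$, so $q = q_{\max}$.

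The main technical point I expect is the noncommutative support computation: identifying $F = \operatorname{supp}(S - q_{\max})$ as the graph projection and verifying $F \wedge E = 0$ in $\mathcal{M}$. The remainder is routine $2 \times 2$ Schur complement algebra combined with faithfulness of $\tau$ and the parallelogram law, with the hypothesis that $X - Z$ be invertible entering exactly to make $S_{22}$ invertible within its corner.
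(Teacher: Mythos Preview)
Your argument is correct, but it takes a substantially longer road than the paper does. The paper's proof consists of two reductions: translate so that $Z=0$, then conjugate by $(X)^{-1/2}=(X-Z)^{-1/2}$ (which preserves both the order and all the $\delta$ values, since $\delta(CAD)=\delta(A)$ for invertible $C,D$). After this, the problem is: given $0\leq Y\leq I$, find $W$ with $0\leq W\leq Y\leq I$, $\delta(I-Y)=\delta(I-W)$, and $\delta(I-W)+\delta(W)=1$. A one-line spectral computation shows the last condition forces $W$ to be a projection, and then the other two force $W=\chi_{\{1\}}(Y)$; existence and uniqueness fall out simultaneously.

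By contrast, you stay in the un-normalized picture and carry out a $2\times2$ block analysis relative to $E=\operatorname{supp}(X-Y)$: the invertibility hypothesis makes the $(I-E)$ corner of $Y-Z$ invertible, the Schur complement produces the maximal admissible $q$, a $VV^*$ factorization identifies $\operatorname{supp}(S-q_{\max})$ as a graph projection equivalent to $I-E$, and uniqueness follows from the parallelogram law. All of this is sound (and your graph argument for $F\wedge E=0$ is clean), and it does yield an explicit block formula for $W$. But the congruence-by-$(X-Z)^{-1/2}$ trick accomplishes the same thing in a couple of lines, with the spectral projection $\chi_{\{1\}}$ playing the role your Schur complement plays. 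The only advantage of your route is that the Schur-complement picture makes the extremal structure visible without a change of variables; the cost is a page of machinery for what the paper does almost for free.
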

\begin{proof}
One may assume $Z=0$ without loss of generality. 
Then $X\geq Y\geq 0$ and $X$ is invertible. 
By multiplying $X^{-1/2}$ from both sides of each operator, we may also assume $X=I$ without loss of generality.
Then we have $I\geq Y\geq 0$.
Assume that a self-adjoint operator $W\in \cM$ satisfies the desired properties. 
From $\delta(I-W)+\delta(W)=1$, we see that $W$ is a projection. 
This together with $I\geq Y\geq W\geq  0$, $\delta(I-Y)=\delta(I-W)$ shows that $W=\chi_{\{1\}}(Y)$. Conversely, it is easy to see that $W=\chi_{\{1\}}(Y)$ satisfies the desired properties.
\end{proof}

\begin{lemma}\label{snumber}
Let $X, Y\in \cM$ be self-adjoint operators satisfying $X\leq Y$. 
If $f\colon\sigma(X)\cup \sigma(Y)\to \mathbb{R}$ is a monotone decreasing function, then $\tau(f(X))\geq \tau(f(Y))$.
\end{lemma}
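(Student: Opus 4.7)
The plan is to deduce the lemma from the following spectral comparison: if $X \leq Y$ are self-adjoint in $\cM$, then $\tau(\chi_{(s,\infty)}(X)) \leq \tau(\chi_{(s,\infty)}(Y))$ for every $s \in \mathbb{R}$.

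For the spectral comparison, fix $s \in \mathbb{R}$ and $\epsilon > 0$, and set $P_\epsilon := \chi_{(s+\epsilon,\infty)}(X)$ and $Q := \chi_{(-\infty,s]}(Y)$. Any unit vector $\xi$ in the range of $P_\epsilon \wedge Q$ would satisfy $\langle X\xi,\xi\rangle \geq s+\epsilon$ and $\langle Y\xi,\xi\rangle \leq s$, contradicting $X \leq Y$. Hence $P_\epsilon \wedge Q = 0$, and the finite-factor identity $\tau(P\vee Q) + \tau(P\wedge Q) = \tau(P) + \tau(Q)$ forces $\tau(P_\epsilon) + \tau(Q) \leq 1$, i.e., $\tau(P_\epsilon) \leq \tau(\chi_{(s,\infty)}(Y))$. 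Letting $\epsilon \downarrow 0$ and invoking normality of $\tau$ (so that $\tau(P_\epsilon) \uparrow \tau(\chi_{(s,\infty)}(X))$) yields the claim.

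To finish, I would extend $f$ monotonically to all of $\mathbb{R}$ and, by replacing $f$ with $a - f$ for a suitable constant $a$, reduce to proving the reversed inequality $\tau(g(X)) \leq \tau(g(Y))$ for a monotone increasing nonnegative function $g$. For every $\lambda > 0$, the superlevel set $\{g > \lambda\}$ is a half-line of the form $(c_\lambda, \infty)$ or $[c_\lambda, \infty)$, so the spectral comparison (and its one-sided analog for $\chi_{[t,\infty)}$, obtained as a monotone limit) gives $\tau(\chi_{\{g>\lambda\}}(X)) \leq \tau(\chi_{\{g>\lambda\}}(Y))$. Integrating in $\lambda$ via the layer-cake formula $\tau(g(X)) = \int_0^\infty \tau(\chi_{\{g>\lambda\}}(X))\,d\lambda$ then delivers the desired inequality.

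The main obstacle I anticipate is the $\epsilon$-trick in the spectral-comparison step: without shrinking the half-line to $(s+\epsilon,\infty)$, a unit vector in $P \wedge Q$ would only satisfy $\langle X\xi,\xi\rangle \geq s$ and $\langle Y\xi,\xi\rangle \leq s$, which is consistent with $X \leq Y$ and would not yield a contradiction. Once this subtlety is handled, the subsequent passage from spectral projections to general monotone $f$ is a routine layer-cake computation.
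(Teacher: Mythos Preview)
Your argument is correct. The underlying idea coincides with the paper's---both rest on the spectral distribution inequality $\tau(\chi_{(s,\infty)}(X))\leq \tau(\chi_{(s,\infty)}(Y))$---but the packaging differs. The paper simply invokes Fack--Kosaki: it passes to the generalized $s$-numbers $\mu_t(Z)=\inf\{s\geq 0:\tau(\chi_{(s,\infty)}(Z))\leq t\}$, cites $\mu_t(X)\leq\mu_t(Y)$ from \cite[Lemma~2.5(iii)]{FK}, and then uses the rearrangement identity $\tau(f(Z))=\int_0^1 f(\mu_t(Z))\,dt$ from \cite[Remark~3.3]{FK}. You instead prove the spectral comparison from scratch (via $P_\epsilon\wedge Q=0$ and the Kaplansky parallelogram law) and integrate by the layer-cake formula rather than the decreasing-rearrangement formula. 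The two integral representations are dual to one another, so nothing essentially new is happening; what your route buys is self-containment, at the cost of a few more lines. One incidental remark: the $\epsilon$-trick is convenient but not truly necessary, since a unit vector in the range of $\chi_{(s,\infty)}(X)$ already satisfies $\langle X\xi,\xi\rangle>s$ strictly (its spectral measure is a probability measure supported on the open half-line), which would contradict $\langle Y\xi,\xi\rangle\leq s$ directly.
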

\begin{proof}
By considering the pair $X+\alpha I$, $Y+\alpha I$ instead of $X$, $Y$ for a suitable real number $\alpha$, one may assume $\sigma(X), \sigma(Y)\subset (0,\infty)$ without loss of generality.
One may also assume that the range of $f$ is contained in $(0,\infty)$ by adding some constant function.
For a real number $t\in (0,1]$ and a positive operator $Z\in \cM$, we define
\[
\mu_t(Z) =\inf\{s\geq 0\mid \tau(\chi_{(s,\infty)}(Z))\leq t\}.
\]
By \cite[Lemma 2.5 (iii)]{FK}, we have $\mu_t(X)\leq \mu_t(Y)$ for every $t\in (0,1]$. 
This together with \cite[Remark 3.3]{FK} and the assumption on $f$ implies 
\[
\tau(f(X))= \int_0^1 f(\mu_t(X))\, dt \geq  \int_0^1 f(\mu_t(Y))\, dt =\tau(f(Y)).
\]
\end{proof}

\begin{lemma}\label{UV}
Let $U, V\in \cM$ be unitaries,  and $r,s$ real numbers with $0\leq r\leq 1$, $0<s<2\pi$. 
If $\delta(U-V)=r$ and $\delta(U-e^{is}V)=1-r$, then there is a projection $Q\in \cP(\cM)$ satisfying $\tau(Q)=r$ and $U=Ve^{isQ}$.
\end{lemma}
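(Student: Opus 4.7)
The plan is to reduce to analyzing a single unitary by setting $W := V^*U$. Since $V$ is unitary, $\delta(U - V) = \delta(V(W - I)) = \delta(W - I) = r$ and similarly $\delta(U - e^{is}V) = \delta(W - e^{is}I) = 1 - r$. So it suffices to produce a projection $Q$ with $\tau(Q) = r$ such that $W = e^{isQ}$, which would yield $U = Ve^{isQ}$.

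The key observation is that for any normal operator $N \in \cM$ in a finite factor, the range projection of $N$ equals $I$ minus the spectral projection of $N$ at $\{0\}$; indeed, the closure of the range of $N$ is the orthogonal complement of $\ker N^* = \ker N$, and $\ker N$ corresponds to the spectral projection at $0$. Applying this to $W - I$ and $W - e^{is}I$, both of which are normal, gives
\[
\tau\bigl(\chi_{\{1\}}(W)\bigr) = 1 - \delta(W - I) = 1 - r, \qquad \tau\bigl(\chi_{\{e^{is}\}}(W)\bigr) = 1 - \delta(W - e^{is}I) = r.
\]
Set $E_1 := \chi_{\{1\}}(W)$ and $E_2 := \chi_{\{e^{is}\}}(W)$. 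Since $1 \neq e^{is}$ (because $0 < s < 2\pi$), these spectral projections are orthogonal, so $E_1 + E_2 \leq I$; but $\tau(E_1) + \tau(E_2) = 1$ and $\tau$ is faithful, so $E_1 + E_2 = I$.

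Consequently the spectrum of $W$ is contained in $\{1, e^{is}\}$, and by the spectral theorem $W = E_1 + e^{is}E_2$. Setting $Q := E_2$, we have $\tau(Q) = r$ and $W = (I - Q) + e^{is}Q = e^{isQ}$, as desired. The only step requiring some care is the identification of the range projection of a normal operator with the complement of its $0$-spectral projection in the finite factor setting; the rest is a direct spectral-theoretic accounting, so I do not anticipate a serious obstacle.
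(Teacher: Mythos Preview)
Your proof is correct and follows essentially the same route as the paper: reduce to the single unitary $W=V^*U$, translate the hypotheses into $\delta(W-I)=r$ and $\delta(W-e^{is}I)=1-r$, and read off the spectral projections of $W$ at $1$ and $e^{is}$. The paper compresses steps 4--9 into the phrase ``considering the spectral distribution of the unitary $V^*U$''; your version simply makes that argument explicit.
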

\begin{proof}
It follows from $\delta(U-V)=r$ and $\delta(U-e^{is}V)=1-r$ that $\delta(V^*U-I)=r$ and $\delta(V^*U-e^{is}I)=1-r$.
Considering the spectral distribution of the unitary $V^*U$, we see that there is a projection $Q\in \cP(\cM)$ satisfying $\tau(Q)=r$ and $V^*U=e^{isQ}$.
\end{proof}

\begin{lemma}\label{lb}
Let $B, {B'}\in \cM$ satisfy $0\leq B\leq (\pi-2\theta) I$, $0\leq {B'}\leq (\pi-2\theta) I$, and 
\[
\cot (B+{\varphi}I)\geq \cot ({B'}+{\varphi}I)\geq \cot (B+{3\varphi}I).
\]
Set 
$r=\delta(\cot (B+{\varphi}I)- \cot ({B'}+{\varphi}I) )$.
Then $\tau({B'})-\tau(B) \leq r\theta$.
Moreover, 
$\tau({B'})-\tau(B) = r\theta$
holds if and only if $\delta(\cot ({B'}+{\varphi}I)- \cot (B+{3\varphi}I) ) = 1-r$.
\end{lemma}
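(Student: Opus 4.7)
The plan is to reduce this inequality to an \emph{equality} for the extremal operator $W$ of Lemma~\ref{XYZW}. Put $X := \cot(B + \varphi I)$, $Y := \cot(B' + \varphi I)$, $Z := \cot(B + 3\varphi I)$; the hypothesis reads $X \ge Y \ge Z$ with $\delta(X - Y) = r$, while the identity $\cot\alpha - \cot(\alpha + \theta I) = \sin\theta \cdot (\sin\alpha)^{-1}(\sin(\alpha + \theta I))^{-1}$ applied with $\alpha := B + \varphi I$ shows that $X - Z$ is positive invertible. Lemma~\ref{XYZW} then furnishes a self-adjoint $W \in \cM$ with $X \ge Y \ge W \ge Z$, $\delta(X - W) = r$, and $\delta(W - Z) = 1 - r$. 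Since $\operatorname{arccot}$ is monotone decreasing and $Y \ge W$, Lemma~\ref{snumber} yields $\tau(\operatorname{arccot}(Y)) \le \tau(\operatorname{arccot}(W))$, and since $\tau(B') - \tau(B) = \tau(\operatorname{arccot}(Y)) - \tau(\operatorname{arccot}(X))$, the main inequality reduces to the equality
\[
\tau(\operatorname{arccot}(W)) - \tau(\operatorname{arccot}(X)) = r\theta.
\]

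To prove the latter, use the Cayley transform $C(T) := (T + iI)(T - iI)^{-1}$. The scalar identity $\cot\alpha \pm i = e^{\pm i\alpha}/\sin\alpha$ gives, via functional calculus, $C(\cot\alpha) = e^{2i\alpha}$ at the operator level; in particular $C(X) = e^{2i(B + \varphi I)}$ and $C(Z) = e^{2i\theta}\,C(X)$. From $C(A) - C(B) = -2i(A - iI)^{-1}(A - B)(B - iI)^{-1}$ and invertibility of $(T - iI)$ one extracts $\delta(C(A) - C(B)) = \delta(A - B)$ for any self-adjoint $A, B$. Applying Lemma~\ref{UV} with $U := C(W)$, $V := C(X)$, $s := 2\theta$ therefore produces a projection $Q \in \cP(\cM)$ satisfying $\tau(Q) = r$ and $C(W) = C(X)\,e^{2i\theta Q}$. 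Consider the smooth unitary path $U(t) := C(X)\,e^{2it\theta Q}$, $t \in [0, 1]$, joining $C(X)$ to $C(W)$. Since $Q$ commutes with $e^{2it\theta Q}$, one has $U(t)^{-1} \dot U(t) = 2i\theta Q$, and the resulting continuous phase increment along the path is
\[
\int_0^1 \tau\bigl(-i\,U(t)^{-1}\dot U(t)\bigr)\,dt = 2\theta\tau(Q) = 2r\theta.
\]
As the spectra of $2\operatorname{arccot}(X)$ and $2\operatorname{arccot}(W)$ both lie in $(0, 2\pi)$, this phase increment identifies with $2\tau(\operatorname{arccot}(W)) - 2\tau(\operatorname{arccot}(X))$ modulo $2\pi$; combined with the a priori bound $0 \le \tau(\operatorname{arccot}(W)) - \tau(\operatorname{arccot}(X)) \le \theta$ (from $Z \le W \le X$ and Lemma~\ref{snumber}) and $r\theta \in [0, \theta] \subset [0, \pi)$, this pins down exact equality.

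For the \emph{moreover} clause, equality in the main inequality amounts to $\tau(\operatorname{arccot}(Y)) = \tau(\operatorname{arccot}(W))$. With $Y \ge W$ and strict monotonicity of $\operatorname{arccot}$, tracking the proof of Lemma~\ref{snumber} gives $\mu_t(Y) = \mu_t(W)$ a.e., so $Y$ and $W$ have the same spectral distribution and in particular $\tau(Y) = \tau(W)$; combined with $Y - W \ge 0$ and faithfulness of $\tau$, this forces $Y = W$ and hence $\delta(Y - Z) = \delta(W - Z) = 1 - r$. The converse is immediate from the uniqueness in Lemma~\ref{XYZW}. The main technical obstacle is the phase-increment computation: the inverse Cayley $C^{-1}(U(t))$ may fail to be a bounded self-adjoint operator at intermediate $t$ (when $1$ enters the spectrum of $U(t)$), so the argument must be phrased purely in terms of the unitary path $U(t)$ and its continuous phase lift rather than by differentiating $\tau(\operatorname{arccot}(C^{-1}(U(t))))$ along a self-adjoint path.
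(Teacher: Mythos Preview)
Your overall architecture matches the paper's proof almost exactly: introduce the extremal operator $W$ via Lemma~\ref{XYZW}, reduce the inequality to an equality for $W$ via Lemma~\ref{snumber}, pass to unitaries (your Cayley transform is the same as the paper's formula $\cot\alpha = i(2(e^{2i\alpha}-I)^{-1}+I)$), invoke Lemma~\ref{UV} to obtain the projection $Q$, and then compute the trace difference. The equality characterization is also handled the same way. The one genuine gap is the step ``this phase increment identifies with $2\tau(\operatorname{arccot}(W)) - 2\tau(\operatorname{arccot}(X))$ modulo $2\pi$.'' In a II$_1$ factor the quantity $\int_0^1 \tau(-iU^{-1}\dot U)\,dt$ is \emph{not} determined modulo $2\pi$ by the endpoints of the path: for instance the loop $u(t)=e^{2\pi i t P}$ with $\tau(P)=\tfrac12$ has phase integral $\pi$, so by splicing such loops into your path one can shift the integral by arbitrary multiples of $\pi$ without changing endpoints. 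Thus your a~priori bound cannot by itself pin down the value.

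What is actually needed --- and what the paper supplies in Lemmas~\ref{dix}, \ref{sigma}, \ref{trace} --- is spectral control of $U(t)=e^{2i(B+\varphi I)}e^{2it\theta Q}$ along the whole path. Lemma~\ref{sigma} shows $\sigma(U(t))\subset\{e^{i\alpha}:0\le\alpha\le 2\pi-2\varphi\}$, so a single holomorphic branch $f$ of $\log$ is defined on a neighbourhood of all these spectra; Lemma~\ref{dix} then gives $\tfrac{d}{dt}\tau(f(U(t)))=\tau(U(t)^{-1}\dot U(t))$ directly, with no $2\pi$ ambiguity, and integrating yields $\tau(2\operatorname{arccot}W)-\tau(2(B+\varphi I))=2r\theta$ on the nose. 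Your final paragraph correctly senses that something delicate happens along the path, but the obstruction is not boundedness of $C^{-1}(U(t))$; it is precisely the need to keep $\sigma(U(t))$ inside a fixed proper arc so that a global logarithm exists. Once you add that spectral estimate (your path is exactly the one to which Lemma~\ref{sigma} applies), the proof is complete and coincides with the paper's.
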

\begin{proof}
Since $\cot (B+{\varphi}I)- \cot (B+{3\varphi}I)$ is invertible, by Lemma \ref{XYZW} we may find an operator $C\in \cM$ satisfying $\delta(\cot (B+{\varphi}I)- C )=r$, $\delta(C- \cot (B+{3\varphi}I) ) = 1-r$, and 
\[
\cot (B+{\varphi}I)\geq \cot ({B'}+{\varphi}I)\geq C\geq \cot (B+{3\varphi}I).
\]
One may take an operator $D\in \cM$ with
${\varphi} I\leq D\leq (\pi-{\varphi})I$ and $C= \cot D$.
Then 
\[
\cot (B+{\varphi}I)\geq \cot ({B'}+{\varphi}I)\geq \cot D\geq \cot (B+{3\varphi}I). 
\]
Since the inverse function of $\cot$ is monotone decreasing, Lemma \ref{snumber} implies that
\[
\tau(B+{\varphi}I)\leq \tau({B'}+{\varphi}I)\leq \tau(D)\leq \tau (B+{3\varphi}I). 
\]
Hence
\[
\tau({B'})-\tau(B) =  \tau({B'}+{\varphi}I)-  \tau(B+{\varphi}I) \leq \tau(D)-\tau(B+{\varphi}I).
\]
Moreover, we see that $\tau({B'}+{\varphi}I)=\tau(D)$ holds if and only if 
$\cot ({B'}+{\varphi}I)= \cot D$, which is in turn equivalent to $\delta(\cot ({B'}+{\varphi}I)- \cot (B+{3\varphi}I) ) = 1-r$.
Consequently, to get the desired conclusion, it suffices to show that 
$\tau(D) -\tau(B+{\varphi}I) = r\theta$.
By $\delta(\cot (B+{\varphi}I)- C )=r$ and the equality
\begin{equation}\label{cote}
\begin{split}
\cot (B+{\varphi}I)- C&=\cot (B+{\varphi}I)- \cot D\\
&= i\left(\frac{2I}{e^{2i(B+\varphi I)}-I}+I\right) -i\left(\frac{2I}{e^{2iD}-I}+I\right)\\
&= 2i((e^{2i(B+\varphi I)}-I)^{-1}-(e^{2iD}-I)^{-1})\\
&= 2i(e^{2i(B+\varphi I)}-I)^{-1}(e^{2iD}-e^{2i(B+\varphi I)})(e^{2iD}-I)^{-1},
\end{split}
\end{equation}
we get 
\begin{equation}\label{ichi}
\delta(e^{2iD}-e^{2i(B+\varphi I)}) =r.
\end{equation} 
Similarly, $\delta(C- \cot (B+{3\varphi}I) ) = 1-r$ implies 
\begin{equation}\label{ni}
\delta (e^{2iD}-e^{2i(B+3\varphi I)}) =1-r.
\end{equation}
The equations \eqref{ichi} and \eqref{ni} and Lemma \ref{UV} imply that there is a projection $Q\in \cP(\cM)$ satisfying $\tau(Q)=r$ and $e^{2iD} = e^{2i(B+\varphi I)} \cdot e^{4i\varphi Q} $.
Thus Lemma \ref{trace} below implies that $\tau(2D)=\tau(2(B+\varphi I)) + \tau(4\varphi Q)$, hence 
$\tau(D)-\tau(B+\varphi I) =2\tau(Q)\varphi = r\theta$.
\end{proof}

\begin{lemma}[{Lemma 3 in Section I.6.11 of \cite{D}}]\label{dix}
Let $\mathcal{U}\subset \mathbb{C}$ be an open set and $f\colon \mathcal{U}\to \mathbb{C}$ an analytic function. 
Let $g\colon [0,1]\to \mathcal{M}$ be a norm-differentiable function. 
Assume that 
the spectra $\sigma(g(t))$, $t\in [0,1]$, are contained in a common compact subset of $\mathcal{U}$. 
Then 
\[
\frac{d}{dt}\tau(f\circ g(t))=\tau(f'(g(t))g'(t))
\] holds for every $t\in [0,1]$.
\end{lemma}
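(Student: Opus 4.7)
The plan is to reduce the statement to the holomorphic functional calculus via a Cauchy contour integral and then differentiate under the integral sign. Since the spectra $\sigma(g(t))$ lie in a common compact subset $K\subset\mathcal{U}$, I choose a smooth finite cycle $\Gamma$ in $\mathcal{U}\setminus K$ whose winding number around every point of $K$ equals $1$. The holomorphic functional calculus then gives
\[
f(g(t))=\frac{1}{2\pi i}\int_\Gamma f(z)(zI-g(t))^{-1}\,dz,
\]
where the integrand is norm-continuous in $(t,z)\in[0,1]\times\Gamma$. Applying the normal tracial state $\tau$ (which is norm-continuous) I get
\[
\tau(f(g(t)))=\frac{1}{2\pi i}\int_\Gamma f(z)\,\tau\!\left((zI-g(t))^{-1}\right)dz.
\]

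Next I differentiate in $t$. The norm-differentiability of $g$ together with the identity
\[
(zI-g(t+h))^{-1}-(zI-g(t))^{-1}=(zI-g(t+h))^{-1}\bigl(g(t+h)-g(t)\bigr)(zI-g(t))^{-1}
\]
shows that $t\mapsto(zI-g(t))^{-1}$ is norm-differentiable with derivative
\[
\frac{d}{dt}(zI-g(t))^{-1}=(zI-g(t))^{-1}g'(t)(zI-g(t))^{-1},
\]
and this derivative is jointly norm-continuous on $[0,1]\times\Gamma$. Hence it is uniformly bounded on $[0,1]\times\Gamma$, justifying interchange of $d/dt$ with the contour integral, and I may differentiate inside $\tau$ as well. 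Using the trace property $\tau(AB)=\tau(BA)$, I obtain
\[
\frac{d}{dt}\tau(f(g(t)))=\frac{1}{2\pi i}\int_\Gamma f(z)\,\tau\!\left((zI-g(t))^{-2}g'(t)\right)dz.
\]

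Finally, by Cauchy's formula $f'(w)=\frac{1}{2\pi i}\int_\Gamma f(z)(z-w)^{-2}\,dz$ for every $w\in K$, and the holomorphic functional calculus therefore yields
\[
f'(g(t))=\frac{1}{2\pi i}\int_\Gamma f(z)(zI-g(t))^{-2}\,dz.
\]
Pulling this identity inside $\tau(\,\cdot\,g'(t))$—again justified by norm-convergence of the Riemann sums approximating the contour integral together with normality of $\tau$—produces the desired formula
\[
\frac{d}{dt}\tau(f(g(t)))=\tau\!\left(f'(g(t))g'(t)\right).
\]

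The only delicate point is the triple interchange of $d/dt$, $\tau$, and the $z$-integral; I expect this to be the main technical obstacle. It is handled uniformly by the observation that $K$ is compact, $\Gamma$ is bounded away from $K$, and hence the resolvent $(zI-g(t))^{-1}$ as well as its $t$-derivative are norm-bounded uniformly in $(t,z)\in[0,1]\times\Gamma$. Once these uniform bounds are in place, both interchanges follow from standard Lebesgue-type arguments applied to Banach-space-valued Riemann integrals and from norm-continuity of $\tau$.
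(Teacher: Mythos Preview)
The paper does not supply its own proof of this lemma; it is simply quoted from Dixmier's book (Lemma~3 in Section~I.6.11 of \cite{D}), so there is nothing in the paper to compare against directly.

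Your argument via the Cauchy integral representation of the holomorphic functional calculus is the standard route and is correct. The resolvent identity gives the norm-derivative of $(zI-g(t))^{-1}$, the uniform bounds on $[0,1]\times\Gamma$ justify the interchanges, and the trace property collapses $(zI-g(t))^{-1}g'(t)(zI-g(t))^{-1}$ to $(zI-g(t))^{-2}g'(t)$ inside $\tau$, after which Cauchy's formula for $f'$ finishes the job. One cosmetic remark: where you invoke ``normality of $\tau$'' to pass $\tau$ through the Riemann sums, ordinary norm-continuity of the state $\tau$ already suffices; normality (continuity along increasing nets) is not needed here.
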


\begin{lemma}\label{sigma}
Let $X, Y\in \cM$ be positive operators. 
Then 
\[
\sigma(e^{iX}e^{iY})\subset \{e^{i\alpha}\mid 0\leq \alpha\leq \lVert X\rVert+ \lVert Y\rVert\}.
\]
\end{lemma}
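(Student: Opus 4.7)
The plan is to study the continuous path of unitaries $U(t) = e^{itX}e^{itY}$ for $t\in[0,1]$, with $U(0)=I$ and $U(1) = e^{iX}e^{iY}$, exploiting that its infinitesimal generator is positive. First I would compute
\[
U'(t) = iH(t)U(t), \qquad H(t) := X + e^{itX}Ye^{-itX},
\]
so that $H(t) \ge 0$ and $\lVert H(t)\rVert \le \lVert X\rVert+\lVert Y\rVert =: M$. The case $M \ge 2\pi$ is immediate (the claimed arc is all of $S^1$), so I assume $M < 2\pi$ and fix $\delta>0$ with $M+\delta < 2\pi$.

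Next I would choose a branch of $\log$ analytic on $\mathcal{V} := \mathbb{C} \setminus \{re^{i(M+\delta)} : r\ge 0\}$ with $-i\log e^{i\alpha} = \alpha$ for $\alpha \in (M+\delta-2\pi,\, M+\delta)$, and set
\[
T := \{t \in [0,1] : e^{i(M+\delta)} \notin \sigma(U(s)) \text{ for all } s\in[0,t]\}.
\]
Then $T$ is a relative neighborhood of $0$ by upper semicontinuity of the spectrum; for $t\in T$, continuity of the spectrum for unitaries makes $\bigcup_{s\in[0,t]}\sigma(U(s))$ a compact subset of $\mathcal{V}$, so $L(s) := -i\log U(s)$ is a norm-differentiable self-adjoint family. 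Applying Lemma~\ref{dix} to $f(z) = e^{\pm s(-i\log z)}$ and cycling the trace yields
\[
\frac{d}{dt}\tau(e^{\pm sL(t)}) = \pm s\,\tau(H(t)e^{\pm sL(t)}), \qquad s>0,\ t\in T.
\]

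From the $-s$ case, $H(t)\ge 0$ forces $\tau(e^{-sL(t)}) \le \tau(I) = 1$; any negative spectral mass of $L(t)$ would make this quantity blow up as $s\to\infty$, so $L(t)\ge 0$ by faithfulness of $\tau$. The $+s$ case then combines with $H\le MI$ (giving $\tau(He^{sL}) = \tau(e^{sL/2}He^{sL/2})\le M\tau(e^{sL})$) and Gr\"onwall's inequality to produce $\tau(e^{sL(t)})\le e^{sMt}$; a dual spectral argument---noting that any spectral mass of $L(t)$ above $Mt+\epsilon$ would force $\tau(e^{sL(t)})$ to exceed $e^{s(Mt+\epsilon)}\cdot\tau(\chi_{[Mt+\epsilon,\infty)}(L(t)))$ and thus eventually beat $e^{sMt}$---forces $\lVert L(t)\rVert\le Mt$. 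Consequently $\sigma(U(t))\subset \{e^{i\alpha}: 0\le \alpha\le Mt\}$ for every $t\in T$.

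Since this arc has distance at least $\delta$ from $e^{i(M+\delta)}$, the set $T$ is also closed in $[0,1]$, so $T=[0,1]$, and the lemma follows by evaluating at $t=1$. The main obstacle is the interplay between the trace estimates and the continuation argument: Lemma~\ref{dix}'s common-compact-subset hypothesis needs the spectrum to avoid the branch cut along the path, which is precisely what the trace bounds deliver, so the Gr\"onwall estimates and the bootstrap along $T$ must be carried out in tandem, together with the translation of the $L^\infty$-type trace bounds into pointwise spectral bounds via the faithfulness of $\tau$.
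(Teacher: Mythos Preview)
Your argument is correct and takes a genuinely different route from the paper's. The paper dispatches the lemma with a short numerical-range argument: for $\lVert X\rVert+\lVert Y\rVert<\alpha<2\pi$ the convex hulls of $\sigma(e^{iX})$ and $\sigma(e^{i\alpha}e^{-iY})$ are disjoint arcs of the unit circle, so $\langle(e^{iX}-e^{i\alpha}e^{-iY})h,h\rangle$ is uniformly bounded away from $0$ over unit vectors $h$, which forces $e^{iX}-e^{i\alpha}e^{-iY}$ (hence $e^{iX}e^{iY}-e^{i\alpha}I$) to be invertible. This uses neither the trace nor the factor hypothesis and works in any represented C$^*$-algebra. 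Your approach instead follows the unitary path $U(t)=e^{itX}e^{itY}$, extracts its self-adjoint logarithm $L(t)$, and controls $\sigma(L(t))$ through the trace moments $\tau(e^{\pm sL(t)})$ via Lemma~\ref{dix} and Gr\"onwall, closed off by a continuation argument. It is considerably heavier machinery and genuinely requires the faithful trace, but it delivers the stronger conclusion $\sigma(U(t))\subset\{e^{i\alpha}:0\le\alpha\le Mt\}$ along the whole path and is methodologically in the same spirit as the proof of Lemma~\ref{trace}.

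One small point to tighten: the clause ``$T$ is also closed in $[0,1]$, so $T=[0,1]$'' is not quite a complete inference, since a closed initial segment need not be all of $[0,1]$. The same uniform distance bound that you use for closedness, together with upper semicontinuity of the spectrum, also shows that $T$ is relatively open (if $t\in T$ then $\sigma(U(t))$ stays a fixed positive distance from $e^{i(M+\delta)}$, so the spectrum continues to avoid that point for $s$ slightly beyond $t$); you clearly have this in mind under ``bootstrap,'' but it should be stated.
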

\begin{proof}
This is trivial when $\lVert X\rVert+ \lVert Y\rVert\geq 2\pi$ (note that $e^{iX}e^{iY}$ is unitary).
Assume that $\lVert X\rVert+ \lVert Y\rVert<\alpha<2\pi$. 
We need to show that $e^{iX}e^{iY} -e^{i\alpha}I$ is invertible.
Observe that the convex hull of $\sigma(e^{iX})$ and that of $\sigma(e^{i\alpha}e^{-iY})$ do not intersect. 
Let $\mathcal{H}$ be the Hilbert space on which $\cM$ acts.
Observe that $\langle Zh, h\rangle$ lies in the convex hull of $\sigma(Z)$ for every normal operator $Z\in \cM$ and every unit vector $h$.
It follows that there is a positive real number $\varepsilon>0$ satisfying $\lvert\langle (e^{iX}-e^{i\alpha}e^{-iY})h, h\rangle\rvert \geq \varepsilon$ for every unit vector $h\in \mathcal{H}$.
This implies that $\lVert (e^{iX}-e^{i\alpha}e^{-iY})h\rVert\geq \varepsilon$ and $\lVert (e^{iX}-e^{i\alpha}e^{-iY})^*h\rVert\geq \varepsilon$ for every unit vector $h\in \mathcal{H}$.
Hence $e^{iX}-e^{i\alpha}e^{-iY}$ is invertible, so $e^{iX}e^{iY} -e^{i\alpha}I$ is also invertible.
\end{proof}

\begin{lemma}[{See also Lemma 4 in Section I.6.11 of \cite{D}}]\label{trace}
Let $X, Y, Z\in \cM$ be positive operators satisfying $\lVert X\rVert +\lVert Y\rVert<2\pi$, $\lVert Z\rVert<2\pi$, and $e^{iZ}=e^{iX}e^{iY}$. Then $\tau(Z)=\tau(X)+\tau(Y)$.
\end{lemma}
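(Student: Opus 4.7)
The plan is to interpolate between $0$ and $Z$ along the norm-analytic path of unitaries $g(t):=e^{itX}e^{itY}$, $t\in[0,1]$, and to differentiate the trace of an appropriately chosen positive logarithm of $g(t)$ by means of Lemma~\ref{dix}.

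Set $c:=\lVert X\rVert+\lVert Y\rVert<2\pi$ and fix $\epsilon\in(0,(2\pi-c)/2)$. On the slit plane $\mathcal{U}:=\mathbb{C}\setminus\{re^{-i\epsilon}\mid r\ge 0\}$, let $\log$ denote the branch of the logarithm determined by $\log 1=0$, so that $\log(e^{i\alpha})=i\alpha$ for $\alpha\in(-\epsilon,2\pi-\epsilon)$. By Lemma~\ref{sigma}, $\sigma(g(t))$ is contained in the compact set $K:=\{e^{i\alpha}\mid 0\le \alpha\le c\}\subset\mathcal{U}$ for every $t\in[0,1]$. Hence $W(t):=-i\log(g(t))$ is a well-defined positive operator with $\lVert W(t)\rVert\le c<2\pi$ and $W(0)=0$. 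Since $\alpha\mapsto e^{i\alpha}$ is injective on $[0,2\pi)$, a positive operator of norm strictly less than $2\pi$ is uniquely determined by its exponential; combined with $e^{iW(1)}=e^{iX}e^{iY}=e^{iZ}$, this forces $W(1)=Z$.

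Next, apply Lemma~\ref{dix} to the analytic function $f(z)=-i\log z$ on $\mathcal{U}$ and to the norm-differentiable path $g$. The derivative is
\[
g'(t)=iXe^{itX}e^{itY}+e^{itX}(iY)e^{itY},
\]
so $g(t)^{-1}g'(t)=e^{-itY}(iX)e^{itY}+iY$, and the tracial property gives $\tau(g(t)^{-1}g'(t))=i\tau(X)+i\tau(Y)$. Therefore
\[
\frac{d}{dt}\tau(W(t))=\tau\bigl(-ig(t)^{-1}g'(t)\bigr)=\tau(X)+\tau(Y),
\]
which is constant in $t$; integrating from $0$ to $1$ yields $\tau(Z)=\tau(W(1))-\tau(W(0))=\tau(X)+\tau(Y)$.

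The main subtle point is the choice of a single holomorphic branch of $\log$ that is analytic in a neighborhood of every $\sigma(g(t))$ simultaneously and that maps these spectra into $i[0,c]$, since this is precisely what guarantees that $W(t)$ remains positive with norm below $2\pi$ and, at $t=1$, coincides with $Z$ rather than merely being congruent to it modulo $2\pi$. Once this set-up has been secured via Lemma~\ref{sigma}, the rest is a direct application of Dixmier's derivative formula together with the tracial invariance.
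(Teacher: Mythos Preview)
Your proof is correct and follows essentially the same route as the paper: apply Lemma~\ref{sigma} to confine the spectra of a path of unitaries to a fixed arc, choose a holomorphic branch of $\log$ on the corresponding slit plane, and differentiate the trace via Lemma~\ref{dix}. The only difference is the choice of path---the paper uses $g(t)=e^{iX}e^{itY}$, which makes $g(t)^{-1}g'(t)=iY$ without needing the conjugation-invariance of $\tau$, whereas your path $g(t)=e^{itX}e^{itY}$ requires that extra (but harmless) step.
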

\begin{proof}
Consider the mapping $g\colon [0,1]\to \mathcal{M}$ defined by $g(t)=e^{iX}e^{itY}$. 
By Lemma \ref{sigma}, we get $\sigma(g(t))\subset \{e^{i\alpha}\mid 0\leq \alpha\leq \lVert X\rVert +\lVert Y\rVert\}$ for every $t\in [0,1]$. 
Fix $\beta\in (\lVert X\rVert +\lVert Y\rVert,2\pi)$.
Let $f\colon \{re^{i\alpha}\mid r>0,\, \beta-2\pi< \alpha<\beta\}\to \mathbb{C}$ be the inverse mapping of the exponential function on $\{a+bi\mid a, b\in \mathbb{R},\ \beta-2\pi< b<\beta\}$.
Observe that $g'(t)=e^{iX}e^{itY}\cdot iY=g(t)\cdot iY$ and $f'(z) = 1/z$.
Thus Lemma \ref{dix} implies that 
\[
\frac{d}{dt}\tau(f\circ g(t))=\tau(g(t)^{-1}g'(t)) = i\tau(Y).
\]
By integrating over $[0,1]$, we get
\[
\tau(f\circ g(1))-\tau(f\circ g(0)) =i\tau(Y).
\]
Since $g(1)=e^{iX}e^{iY}=e^{iZ}$ and $g(0)=e^{iX}$, we obtain $f\circ g(1)=iZ$ and $f\circ g(0)=iX$. Thus the desired conclusion follows.
\end{proof}

Now we are ready to prove one-half of Theorem \ref{tau}.
\begin{proof}[Proof of $\nu_\cM(P) \geq (2\cos \theta)^{-1}$]
Let $A_k$ and $B_k$ be as above.
Assume that 
\[
\max_{1\leq k\leq n}\lVert (P-A_{k-1})^{1/2}A_k^{1/2}\rVert< (2\cos\theta)^{-1}.
\]
By Lemmas \ref{la}, \ref{XY}, \ref{lb}, we have 
$\tau(B_k)-\tau(B_{k-1})< \delta(A_k-A_{k-1})\theta$ for every $k$. 
Thus we obtain $\tau(B_n)-\tau(B_0)< \theta$. 
Since $B_0=0$ and $B_n =(\pi-2\theta)P$, we get $(\pi-2\theta)\tau(P)< \theta$, so $\displaystyle \theta > \frac{\tau(P)\pi}{1+2\tau(P)}$, a contradiction.
\end{proof}

We continue with the other half of the proof. 
We first study the case where $\cM=\mathbb{M}_n$ is the $n\times n$ matrix algebra and $\tau(X)=\mathrm{tr}\, X/n$, $X\in \mathbb{M}_n$. 

\begin{lemma}\label{XYZ}
Let $X, Y, Z\in \mathbb{M}_n$ be self-adjoint ($=$hermitian) matrices. 
If $X-Z$ is positive and invertible, and if $\mathrm{rank}\, (X-Y)+\mathrm{rank}\, (Y-Z)=n$, then $X\geq Y\geq Z$.
\end{lemma}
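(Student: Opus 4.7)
The plan is to set $A := X - Y$ and $B := Y - Z$, so that both are self-adjoint matrices with $A + B = X - Z$ positive and invertible, and with $\mathrm{rank}(A) + \mathrm{rank}(B) = n$. The problem reduces to showing $A \geq 0$ and $B \geq 0$. The strategy is a kernel dimension count followed by the Courant--Fischer min-max principle.

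First I would observe that $\ker A \cap \ker B = \{0\}$: any $v$ in the intersection satisfies $(A+B)v = 0$, hence $v = 0$ by invertibility of $A+B$. Combined with the rank hypothesis,
\[
\dim \ker A + \dim \ker B = 2n - (\mathrm{rank}(A) + \mathrm{rank}(B)) = n,
\]
so $\mathbb{C}^n = \ker A \oplus \ker B$ as a (not necessarily orthogonal) internal direct sum. In particular $\dim \ker B = \mathrm{rank}(A)$.

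Next, for any nonzero $v \in \ker B$, the identity $\langle Av, v\rangle = \langle (A+B)v, v\rangle > 0$ shows that $A$ is positive definite as a Hermitian form on the $\mathrm{rank}(A)$-dimensional subspace $\ker B$. By the min-max characterization of eigenvalues, the $\mathrm{rank}(A)$-th largest eigenvalue of $A$ is then strictly positive, so $A$ has at least $\mathrm{rank}(A)$ strictly positive eigenvalues. Since $A$ has exactly $\mathrm{rank}(A)$ nonzero eigenvalues in total, all of them are positive, giving $A \geq 0$. A completely symmetric argument yields $B \geq 0$, whence $X \geq Y \geq Z$.

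The main subtlety I anticipate is resisting the temptation to treat $\mathbb{C}^n = \ker A \oplus \ker B$ as an orthogonal decomposition; in general it is not, so the positivity of $A$ restricted to $\ker B$ does not by itself imply $A \geq 0$ on all of $\mathbb{C}^n$. The min-max step is what bridges this gap and is the real content of the argument.
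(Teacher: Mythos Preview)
Your proof is correct. The min-max step is exactly what is needed: from a $\mathrm{rank}(A)$-dimensional subspace on which the quadratic form of $A$ is positive definite, Courant--Fischer gives $\lambda_{\mathrm{rank}(A)}(A)>0$, and since $A$ has only $\mathrm{rank}(A)$ nonzero eigenvalues this forces $A\geq 0$.

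The paper takes a different and somewhat slicker route. It first translates so that $Z=0$, and then, using that $X$ is positive and invertible, conjugates by $X^{-1/2}$ to reduce to the case $X=I$ (congruence preserves both rank and the positive-semidefinite ordering). The hypothesis then reads $\mathrm{rank}(I-Y)+\mathrm{rank}\,Y=n$, which together with $\ker Y\cap\ker(I-Y)=\{0\}$ forces $\mathbb{C}^n=\ker Y\oplus\ker(I-Y)$; hence the only eigenvalues of the self-adjoint matrix $Y$ are $0$ and $1$, so $Y$ is a projection and $I\geq Y\geq 0$.

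The paper's normalization trick avoids invoking Courant--Fischer and makes the conclusion essentially a one-line observation, at the cost of the (easy) preliminary reduction. Your approach stays in the original coordinates and extracts the conclusion by a clean eigenvalue count; it is slightly longer but entirely self-contained and does not rely on knowing that congruence by $X^{-1/2}$ preserves the relevant data. Either argument is perfectly adequate here.
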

\begin{proof}
As in the proof of Lemma \ref{XYZW},  one may assume $Z=0$ and $X=I$ without loss of generality. 
In that case, we have $\mathrm{rank}\, (I-Y)+\mathrm{rank}\, Y=n$, which shows that $Y$ is a projection, so $I\geq Y\geq 0$, as desired. 
\end{proof}

\begin{proposition}\label{thetak}
Let $0\neq P\in \cP(\mathbb{M}_n)$ and $\displaystyle \theta := \frac{\tau(P)\pi}{1+2\tau(P)}$.
Let $0=A_0\leq A_1\leq \cdots\leq A_n=P$ be in $\mathbb{M}_n$ satisfying $\mathrm{rank}(A_k-A_{k-1})= 1$ for every $k\in \{1,2,\dots,n\}$. 
Take $B_k\in \mathbb{M}_n$ satisfying $0\leq B_k\leq (\pi-2\theta) P$ and the equation \eqref{akbk}.
Then $\displaystyle \max_{1\leq k\leq n}\lVert (P-A_{k-1})^{1/2}A_k^{1/2}\rVert\leq (2\cos \theta)^{-1}$ holds if and only if $\mathrm{tr}\, B_k=k \theta$ holds for every $k\in \{1,2,\dots,n\}$.
\end{proposition}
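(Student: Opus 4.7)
My plan is to prove each direction separately, with the forward direction following by a telescoping argument using the preceding lemmas and the backward direction reducing via Lemma~\ref{XYZ} to a rank identity, which I expect to be the main technical obstacle.

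\emph{Forward direction $(\Rightarrow)$.} Assume $\max_k\|(P-A_{k-1})^{1/2}A_k^{1/2}\|\le(2\cos\theta)^{-1}$. Lemma~\ref{XY} gives $\cot(B_k+\varphi I)\ge\cot(B_{k-1}+3\varphi I)$ at every $k$, and Lemma~\ref{la} applied to $A_{k-1}\le A_k$ gives $\cot(B_{k-1}+\varphi I)\ge\cot(B_k+\varphi I)$. Lemma~\ref{lb} then applies with $r_k=\delta(A_k-A_{k-1})=1/n$, yielding $\tau(B_k)-\tau(B_{k-1})\le\theta/n$. Summing, the left side telescopes to $\tau(B_n)-\tau(B_0)=(\pi-2\theta)\tau(P)=\theta$, matching $\sum_k\theta/n$. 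Hence equality holds at each $k$, giving $\mathrm{tr}\,B_k=k\theta$.

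\emph{Backward direction $(\Leftarrow)$.} Fix $k$ and set $X=\cot(B_{k-1}+\varphi I)$, $Y=\cot(B_k+\varphi I)$, $Z=\cot(B_{k-1}+3\varphi I)$. By Lemma~\ref{la}, $X\ge Y$ and $\mathrm{rank}(X-Y)=n\delta(A_k-A_{k-1})=1$. Since $B_{k-1}$ commutes with $B_{k-1}+3\varphi I$, $X-Z=\sin\theta\cdot[\sin(B_{k-1}+\varphi I)\sin(B_{k-1}+3\varphi I)]^{-1}$ is positive and invertible. Hence $Y-Z=(X-Z)-(X-Y)$ is a rank-one perturbation of an invertible operator, so $\mathrm{rank}(Y-Z)\in\{n-1,n\}$. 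Once $\mathrm{rank}(Y-Z)=n-1$ is verified, Lemma~\ref{XYZ} forces $X\ge Y\ge Z$; in particular $Y\ge Z$, which by Lemma~\ref{XY} gives the required norm bound at step $k$.

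\emph{Main obstacle.} The crux is showing $\mathrm{rank}(Y-Z)=n-1$. My strategy is to first upgrade Lemma~\ref{lb} by removing the \emph{a priori} hypothesis $\cot(B'+\varphi I)\ge\cot(B+3\varphi I)$: under only $\cot(B+\varphi I)\ge\cot(B'+\varphi I)$ with $r=\delta(\cot(B+\varphi I)-\cot(B'+\varphi I))$, I expect that $\tau(B')-\tau(B)\le r\theta$ implies $\cot(B'+\varphi I)\ge\cot(B+3\varphi I)$, so that failure of the latter forces strict overshoot of the trace bound. Granting this, the trace condition $\tau(B_k)-\tau(B_{k-1})=\theta/n=r_k\theta$ forces the cotangent inequality at every $k$, and then the equality case of Lemma~\ref{lb} yields $\delta(Y-Z)=1-r_k=(n-1)/n$, the required rank. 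The upgrade should be attacked by contrapositive: given a unit vector $v$ witnessing $\langle(\cot(B'+\varphi I)-\cot(B+3\varphi I))v,v\rangle<0$, use the spectral data of $B$ and $B'$ (which control $X$ and $Z$ jointly through commutativity with $B_{k-1}$), together with Lemma~\ref{snumber} and the explicit form of $(X-Z)^{-1}$, to produce a strict overshoot $\tau(B')-\tau(B)>r\theta$. This extension of Lemma~\ref{lb} is the step I expect to require the most care.
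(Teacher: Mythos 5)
Your forward direction is correct and is essentially the paper's own argument: Lemmas \ref{la}, \ref{XY} and \ref{lb} give $\tau(B_k)-\tau(B_{k-1})\le\theta/n$ at each step, and the telescoping sum equals $\theta$ exactly, forcing equality throughout. The backward direction, however, has a genuine gap, located exactly at the step you flag as the main obstacle: the ``upgraded Lemma \ref{lb}'' you propose is false as stated. Take $n=2$ and $\tau(P)=1/4$, so $\theta=\pi/6$ and $\varphi=\pi/12$, and let $B=0$, $B'=\mathrm{diag}(\pi/3-\varepsilon,\ \varepsilon)$ for small $\varepsilon>0$. Then $0\le B'\le(\pi-2\theta)I$, we have $\cot(B+\varphi I)\ge\cot(B'+\varphi I)$ with $r=\delta(\cot(B+\varphi I)-\cot(B'+\varphi I))=1$, and $\tau(B')-\tau(B)=\pi/6=r\theta$; yet $\cot(B'+\varphi I)\ge\cot(B+3\varphi I)=\cot(\pi/4)I=I$ fails, because the first eigenvalue of $\cot(B'+\varphi I)$ is $\cot(5\pi/12-\varepsilon)<1$. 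So ``failure of the cotangent inequality forces strict overshoot of the trace bound'' is not true for general $r$; the hypothesis $\mathrm{rank}(A_k-A_{k-1})=1$ is indispensable and cannot be absorbed into a statement about general $\delta$. (There is also a structural circularity in your outline: if your upgraded lemma held, it would yield $Y\ge Z$ directly and the detour through $\mathrm{rank}(Y-Z)=n-1$ and Lemma \ref{XYZ} would be unnecessary.)

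The idea that actually closes the argument in the paper uses the rank-one hypothesis and the exact trace condition together through a determinant computation. From $\mathrm{rank}(\cot(B_{k-1}+\varphi I)-\cot(B_k+\varphi I))=\mathrm{rank}(A_k-A_{k-1})=1$ and the identity \eqref{cote}, one gets $\mathrm{rank}(e^{-2iB_{k-1}}e^{2iB_k}-I)=1$, so this unitary has eigenvalue $1$ with multiplicity $n-1$; since $\det(e^{-2iB_{k-1}}e^{2iB_k})=e^{2i\,\mathrm{tr}(B_k-B_{k-1})}=e^{2i\theta}$, the remaining eigenvalue is $e^{2i\theta}$, whence $e^{-2iB_{k-1}}e^{2iB_k}=e^{2i\theta Q}$ for a rank-one projection $Q$. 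Consequently $e^{2iB_k}-e^{2i(B_{k-1}+\theta I)}=e^{2iB_{k-1}}\bigl(e^{2i\theta Q}-e^{2i\theta}I\bigr)=(1-e^{2i\theta})\,e^{2iB_{k-1}}(I-Q)$ has rank $n-1$, and running the computation \eqref{cote} once more gives $\mathrm{rank}(\cot(B_{k-1}+3\varphi I)-\cot(B_k+\varphi I))=n-1$, which is precisely the input Lemma \ref{XYZ} needs. Your rank count $\mathrm{rank}(Y-Z)\in\{n-1,n\}$ is correct, but ruling out rank $n$ requires exploiting $\mathrm{tr}(B_k-B_{k-1})=\theta$ via $\det e^X=e^{\mathrm{tr}\,X}$, not a strengthened converse of Lemma \ref{lb}.
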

\begin{proof}
Assume that $\displaystyle \max_{1\leq k\leq n}\lVert (P-A_{k-1})^{1/2}A_k^{1/2}\rVert\leq (2\cos \theta)^{-1}$ holds. 
By Lemma \ref{lb}, we have $\tau(B_k)-\tau(B_{k-1})\leq\theta/n$, or equivalently, $\mathrm{tr}\, (B_k-B_{k-1})\leq \theta$ for each $k$. This together with $\mathrm{tr}\, B_0=0$ and $\mathrm{tr}\, B_n=\mathrm{tr}\, ((\pi-2\theta)P) =n\theta$ shows that $\mathrm{tr}\, B_k=k\theta$.

Conversely, assume that $\mathrm{tr}\, B_k=k\theta$ holds for every $k\in \{1,2,\dots,n\}$.
We show $\displaystyle \max_{1\leq k\leq n}\lVert (P-A_{k-1})^{1/2}A_k^{1/2}\rVert\leq (2\cos \theta)^{-1}$. 
By Lemma \ref{XY}, it suffices to show that $\cot (B_k+{\varphi}I)\geq \cot (B_{k-1}+{3\varphi}I)$ for every $k\in \{1,2,\dots, n\}$. 
Let $k\in \{1,2,\dots, n\}$. 
By Lemma \ref{la}, we know that 
\begin{equation}\label{1}
\mathrm{rank}\,(\cot (B_{k-1}+{\varphi}I)-\cot (B_k+{\varphi}I))=\mathrm{rank}\,(A_{k-1}-A_k)=1.
\end{equation}
By imitating the calculation in \eqref{cote}, we get 
\[
\begin{split}
&\quad \cot (B_{k-1}+{\varphi}I)-\cot (B_k+{\varphi}I)\\
&= 2i(e^{2i(B_{k-1}+\varphi I)}-I)^{-1}(e^{2i(B_k+\varphi I)}-e^{2i(B_{k-1}+\varphi I)})(e^{2i(B_k+\varphi I)}-I)^{-1}\\
&= 2ie^{2i\varphi}(e^{2i(B_{k-1}+\varphi I)}-I)^{-1}(e^{2iB_k}-e^{2iB_{k-1}})(e^{2i(B_k+\varphi I)}-I)^{-1}, 
\end{split}
\]
which implies $\mathrm{rank}\,(e^{2iB_k}-e^{2iB_{k-1}})=1$ and thus $\mathrm{rank}\,(e^{-2iB_{k-1}}e^{2iB_k}-I)=1$. 
On the other hand, we know $\mathrm{tr}\, (B_k-B_{k-1})=\theta$ from the assumption. 
Recall that $\det e^{X}=e^{\mathrm{tr}\, X}$ holds for every $X\in \mathbb{M}_n$.
Thus we get 
\[
\det(e^{-2iB_{k-1}}e^{2iB_k}) =e^{\mathrm{tr}\, (-2iB_{k-1})}\cdot e^{\mathrm{tr}\, (2iB_{k})}  
= e^{2i\theta}.
\] 
It follows that there is a projection $Q\in \mathbb{M}_n$ of rank one satisfying $e^{-2iB_{k-1}}e^{2iB_k}=e^{2i\theta Q}$. 
Then
\[
\mathrm{rank}\, (e^{2iB_k}-e^{2i(B_{k-1}+\theta I)})= \mathrm{rank}\,(e^{2iB_{k-1}}(e^{2i\theta Q}-e^{2i\theta} I))=n-1. 
\]
By imitating the calculation in \eqref{cote}, we also get 
\[
\begin{split}
&\quad \cot (B_{k-1}+{3\varphi}I)-\cot (B_k+{\varphi}I)\\
&= 2i(e^{2i(B_{k-1}+3\varphi I)}-I)^{-1}(e^{2i(B_k+\varphi I)}-e^{2i(B_{k-1}+3\varphi I)})(e^{2i(B_k+\varphi I)}-I)^{-1}\\
&= 2ie^{2i\varphi}(e^{2i(B_{k-1}+3\varphi I)}-I)^{-1}(e^{2iB_k}-e^{2i(B_{k-1}+\theta I)})(e^{2i(B_k+\varphi I)}-I)^{-1}.
\end{split}
\]
Thus we get  
\begin{equation}\label{n-1}
\mathrm{rank}\, (\cot (B_{k-1}+{3\varphi}I)-\cot (B_k+{\varphi}I))=n-1.
\end{equation} 
Since $\cot (B_{k-1}+{\varphi}I)- \cot (B_{k-1}+{3\varphi}I)$ is positive and invertible, Lemma \ref{XYZ} with \eqref{1} and \eqref{n-1} shows that  
\[
\cot (B_{k-1}+{\varphi}I)\geq \cot (B_k+{\varphi}I)\geq \cot (B_{k-1}+{3\varphi}I), 
\]
which completes the proof.
\end{proof}

We will need the following well-known fact in matrix analysis. 
\begin{lemma}[See for example {\cite[Theorem 4.3.26]{HJ}}]\label{matrix}
Let 
\[
\alpha_1'\geq\alpha_1\geq \alpha_2'\geq\alpha_2\geq \cdots\geq \alpha_n'\geq\alpha_n
\]
be real numbers. 
Let $A\in \mathbb{M}_n$ be a self-adjoint matrix whose eigenvalues are $\alpha_1\geq\alpha_2\geq \cdots\geq \alpha_n$. 
Then there is a self-adjoint matrix $A'\in \mathbb{M}_n$ whose eigenvalues are $\alpha_1'\geq\alpha_2'\geq \cdots\geq \alpha_n'$ with $A\leq A'$ and $\mathrm{rank}\, (A'-A)\leq 1$.
\end{lemma}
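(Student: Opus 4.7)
The plan is to realize $A'$ as a rank-one positive perturbation $A + vv^*$ of $A$, with the direction vector $v$ determined explicitly via the classical secular equation. As a preliminary step, I would reduce by continuity to the generic case in which all the interlacing inequalities are strict: $\alpha_1' > \alpha_1 > \alpha_2' > \cdots > \alpha_n' > \alpha_n$. This is justified because the subset of $\mathbb{M}_n$ consisting of self-adjoint $A'$ with $A' \geq A$ and $\mathrm{rank}(A' - A) \leq 1$ is closed, and the eigenvalues of a self-adjoint matrix depend continuously on its entries; thus a limit of solutions for slightly perturbed data yields a solution for the original data.

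Having reduced to strict interlacing, I would fix an orthonormal basis $e_1, \ldots, e_n$ diagonalizing $A$ and seek $A'$ in the form $A' = A + vv^*$ with $v = \sum_j c_j e_j$. The matrix determinant lemma gives
\[
\det(\lambda I - A - vv^*) = \det(\lambda I - A) \Bigl(1 - \sum_{j=1}^n \frac{|c_j|^2}{\lambda - \alpha_j}\Bigr),
\]
so demanding that the left-hand side equal $\prod_i (\lambda - \alpha_i')$ translates into
\[
\sum_{j=1}^n \frac{|c_j|^2}{\lambda - \alpha_j} = 1 - \prod_{i=1}^n \frac{\lambda - \alpha_i'}{\lambda - \alpha_i}.
\]
A partial-fraction decomposition of the right-hand side uniquely specifies each $|c_j|^2$ in terms of the $\alpha_i$'s and $\alpha_i'$'s, namely as $-\prod_i (\alpha_j - \alpha_i')\big/\prod_{i \neq j}(\alpha_j - \alpha_i)$.

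The key step, and the one I regard as the main (if still routine) obstacle, is verifying that these prescribed values are nonnegative under the interlacing hypothesis. This reduces to a careful sign count: the factor $\alpha_j - \alpha_i'$ is strictly negative exactly when $i \leq j$ (since $\alpha_i' \geq \alpha_i \geq \alpha_j$ there), giving $j$ negative factors in the numerator, while $\alpha_j - \alpha_i$ is strictly negative exactly when $i < j$, giving $j - 1$ negative factors in the denominator. The overall sign of the ratio is therefore $-1$, which combines with the leading minus sign to yield $|c_j|^2 > 0$. Choosing any square roots $c_j$ defines a vector $v$, and $A' := A + vv^*$ then automatically satisfies $A \leq A'$ and $\mathrm{rank}(A' - A) \leq 1$, with the prescribed spectrum $\{\alpha_i'\}_{i=1}^n$.
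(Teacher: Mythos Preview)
The paper does not actually prove this lemma: it is stated as a ``well-known fact in matrix analysis'' and referenced to \cite[Theorem 4.3.26]{HJ} without further argument. Your proposal supplies a correct, self-contained proof via the classical secular-equation construction of a rank-one perturbation, which is essentially the argument one finds in the cited reference. One small imprecision: your continuity reduction makes all of $\alpha_1',\alpha_1,\alpha_2',\dots$ strictly decreasing, which in general requires perturbing the eigenvalues of $A$ (and hence $A$ itself), not only the target values $\alpha_i'$; your closedness justification is phrased as if $A$ stays fixed. This is easily repaired by taking a sequence $(A^{(m)},A'^{(m)})$ with $A^{(m)}\to A$ and extracting a convergent subsequence of the bounded $A'^{(m)}$, after which the rank and positivity conditions pass to the limit exactly as you say.
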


\begin{lemma}\label{findim}
Let $0\neq P\in \cP(\mathbb{M}_n)$ and $\displaystyle \theta := \frac{\tau(P)\pi}{1+2\tau(P)}$.
Then $\nu_{\mathbb{M}_n}(P) \leq (2\cos \theta)^{-1}$.
\end{lemma}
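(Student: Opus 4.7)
The plan is to reduce, via Propositions \ref{equivalent} and \ref{thetak}, to constructing an explicit chain $0=A_0\leq A_1\leq\cdots\leq A_n=P$ in $\mathbb{M}_n$ with $\mathrm{rank}(A_k-A_{k-1})=1$ whose associated operators $B_k$ (defined by \eqref{akbk}) satisfy $\mathrm{tr}\,B_k=k\theta$ for every $k$. Given such a chain, Proposition \ref{thetak} yields $\max_k\lVert(P-A_{k-1})^{1/2}A_k^{1/2}\rVert\leq(2\cos\theta)^{-1}$, and Proposition \ref{equivalent} then delivers $\nu_{\mathbb{M}_n}(P)\leq(2\cos\theta)^{-1}$.

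Because $f(t):=\sin t/\sin(t+\theta)$ is an order-preserving bijection $[0,\pi-2\theta]\to[0,1]$, the eigenvalues of $B_k$ and those of $A_k$ determine each other, and the interlacing hypothesis of Lemma \ref{matrix} for one passes to the other. My strategy is to prescribe the ordered eigenvalues $\nu_1^{(k)}\geq\cdots\geq\nu_n^{(k)}$ of $B_k$ first, then iterate Lemma \ref{matrix} starting from $A_0=0$ to realize matrices $A_k$ with eigenvalues $\mu_i^{(k)}:=f(\nu_i^{(k)})/(2\cos\theta)$ and $\mathrm{rank}(A_k-A_{k-1})\leq 1$. Since $\mathrm{tr}\,A_k>\mathrm{tr}\,A_{k-1}$, the increment has rank exactly one, and the relation $A_k\leq A_n=P$ forces $B_k$ to be supported on the range of $P$ with $0\leq B_k\leq(\pi-2\theta)P$.

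Let $m=\mathrm{rank}\,P$, and note the identity $n\theta=m(\pi-2\theta)$. I propose the staircase formula
\[
\nu_i^{(k)}:=\max\Bigl(0,\ \min\bigl(\pi-2\theta,\ \theta\bigl(k-(i-1)n/m\bigr)\bigr)\Bigr)\qquad(1\leq i\leq m),
\]
with $\nu_i^{(k)}=0$ for $i>m$. Geometrically the $i$th eigenvalue sits at $0$ until time $(i-1)n/m$, rises linearly with slope $\theta$, and saturates at $\pi-2\theta$ at time $in/m$; at every integer $k$ at most one index is mid-transition, so summing the saturated, transitioning, and dormant contributions yields $\sum_i\nu_i^{(k)}=k\theta$. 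The boundary data $\nu_i^{(0)}=0$ and the match of $B_n$'s spectrum with that of $(\pi-2\theta)P$ are immediate; the interlacing $\nu_i^{(k+1)}\geq\nu_i^{(k)}\geq\nu_{i+1}^{(k+1)}$ reduces to the trivial inequality $n/m\geq 1$ together with the monotonicity of $\max\circ\min$.

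After the chain is assembled by iterating Lemma \ref{matrix}, $A_n$ is a rank-$m$ projection, hence unitarily equivalent to $P$; conjugating the chain by a suitable unitary produces one ending at $P$ while preserving every trace and rank datum, completing the argument. The only nontrivial step is the eigenvalue construction itself: the naive idea of activating one eigenvalue per step in integer-length blocks fails when $n/m$ is not an integer, and the piecewise-linear ramp above is designed precisely to accommodate fractional block lengths by letting the active index switch in the middle of a unit time step.
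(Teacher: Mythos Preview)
Your proposal is correct and follows essentially the same approach as the paper's own proof: both construct the chain $0=A_0\leq\cdots\leq A_n$ by first prescribing the eigenvalue profiles of the $B_k$'s, pushing them through the monotone bijection $t\mapsto(2\cos\theta)^{-1}\sin t/\sin(t+\theta)$, iterating Lemma~\ref{matrix}, and then conjugating so that the top of the chain is $P$ before invoking Propositions~\ref{equivalent} and~\ref{thetak}. In fact your explicit staircase formula $\nu_i^{(k)}=\max\bigl(0,\min(\pi-2\theta,\theta(k-(i-1)n/m))\bigr)$ is, after using the identity $\pi-2\theta=n\theta/m$, exactly the example $\beta_{kl}=\max\{\min\{\pi-2\theta,\,k\theta-(l-1)(\pi-2\theta)\},0\}$ that the paper writes down.
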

\begin{proof}
Fix real numbers $\beta_{kl}\in [0,\pi-2\theta]$ ($0\leq k\leq n$, $1\leq l\leq n$) satisfying the following conditions:
\begin{enumerate}
\item[(a)] $\displaystyle \sum_{l=1}^n\beta_{kl}=k\theta$ for every $k\in \{0,1,\dots, n\}$. 
\item[(b)] $\beta_{k,1}\geq\beta_{k-1, 1}\geq \beta_{k,2}\geq \beta_{k-1, 2}\geq \cdots\geq \beta_{k,n}\geq \beta_{k-1,n}$ for every $k\in \{1,2,\dots, n\}$. 
\item[(c)] $\beta_{nl}=\pi-2\theta$ for every $l\leq \mathrm{rank}\, P$, and $\beta_{nl}=0$ for every $l> \mathrm{rank}\, P$.
\end{enumerate}
For example, we may choose $\beta_{kl}=\max\{\min\{\pi-2\theta, k\theta -(l-1)(\pi-2\theta)\}, 0\}$.

We set 
\[
\alpha_{kl}= \frac{1}{2\cos\theta} \cdot \frac{\sin \beta_{kl}}{\sin (\beta_{kl} +\theta)}\in [0,1].
\]
Since the mapping $t\mapsto \sin t/\sin (t +\theta)$ is monotone on $[0,\pi-2\theta]$, we see from (b) that 
\[
\alpha_{k,1}\geq\alpha_{k-1, 1}\geq \alpha_{k,2}\geq \alpha_{k-1, 2}\geq \cdots\geq \alpha_{k,n}\geq \alpha_{k-1,n}
\]
for every $k\in \{1,2,\dots, n\}$.
Therefore, Lemma \ref{matrix} implies that there are $0=A_0\leq A_1\leq \cdots\leq A_n$ in $\mathbb{M}_n$ such that $\mathrm{rank} (A_k-A_{k-1})=1$ and
$\alpha_{k1}\geq \alpha_{k2}\geq \cdots\geq \alpha_{kn}$ are the eigenvalues of $A_k$ for every $k\in \{1,2,\dots, n\}$. 
By (c), $A_n$ is a projection that has the same rank as $P$. 
Thus $A_n$ is unitarily equivalent to $P$, which implies $\nu_{\mathbb{M}_n}(A_n)=\nu_{\mathbb{M}_n}(P)$. 
Take $B_k\in \mathbb{M}_n$ satisfying $0\leq B_k\leq (\pi-2\theta) A_n$ and the equation \eqref{akbk}. Then (a) implies that $\mathrm{tr}\, B_k=k\theta$ for every $k\in \{1,2,\dots,n\}$. 
Therefore, by Propositions \ref{equivalent} and \ref{thetak}, we obtain $\nu_{\mathbb{M}_n}(A_n)\leq (2\cos \theta)^{-1}$.
\end{proof}

Let us get back to the general situation and complete the proof of Theorem \ref{tau}. 
\begin{proof}[Proof of $\nu_\cM(P) \leq (2\cos \theta)^{-1}$]
If $\cM$ is of type I, then Lemma \ref{findim} gives the desired conclusion. 
Assume that $\cM$ is of type II$_1$.
Let $n\geq 1$ be an integer and take the unique integer $a_n\in \{1,2,\dots, n\}$ with $(a_n-1)/n< \tau(P)\leq a_n/n$.
Since $\cM$ is a type II$_1$ factor, one may take a projection $Q\in \cP(\cM)$ satisfying $P\leq Q$ and $\tau(P)=\tau(Q)\cdot a_n/n$.
One may also take a system of matrix units $\{E_{ij}\}_{1\leq i,j\leq n}$ in the algebra $QMQ$ with $E_{11}+\cdots+E_{a_n a_n}=P$.
Let $\cM_1$ be the algebra generated by $\{E_{ij}\}_{1\leq i,j\leq n}$.
Observe that $P\in \cM_1$ and that $\cM_1$ can be identified with $\mathbb{M}_n$. 
Set $\displaystyle \theta_n := \frac{(a_n/n)\pi}{1+2(a_n/n)}$.
By Lemma \ref{findim}, we obtain $\nu_{\cM_1}(P) \leq (2\cos \theta_n)^{-1}$. 
It is clear that $\nu_{\cM}(P)\leq \nu_{\cM_1}(P)$.
Since $\theta_n\to \theta$ as $n\to \infty$, we get $\nu_\cM(P) \leq (2\cos \theta)^{-1}$, as desired.
\end{proof}

\end{document}